\newtheorem{theorem}{Theorem}[section]
\newtheorem{corollary}[theorem]{Corollary}
\newtheorem{lemma}[theorem]{Lemma}
\newtheorem{proposition}[theorem]{Proposition}
\newtheorem*{cor:euler}{Corollary \ref{cor:euler}}
\newtheorem*{cor:orient0}{Corollary \ref{cor:orient0}}
\newtheorem*{cor:differentbridge}{Corollary \ref{cor:differentbridge}}
\newtheorem*{rep@theorem}{\rep@title}
\newcommand{\newreptheorem}[2]{%
\newenvironment{rep#1}[1]{%
 \def\rep@title{#2 \ref{##1}}%
 \begin{rep@theorem}}%
 {\end{rep@theorem}}}
\newtheorem{question}[theorem]{Question}
\theoremstyle{definition}
\newtheorem{remark}[theorem]{Remark}
\newtheorem*{observation*}{Observation}
\theoremstyle{definition}
\newcommand{\leftrarrows}{\mathrel{\raise.75ex\hbox{\oalign{%
  $\scriptstyle\leftarrow$\cr
  \vrule width0pt height.5ex$\hfil\scriptstyle\relbar$\cr}}}}
\newcommand{\lrightarrows}{\mathrel{\raise.75ex\hbox{\oalign{%
  $\scriptstyle\relbar$\hfil\cr
  $\scriptstyle\vrule width0pt height.5ex\smash\rightarrow$\cr}}}}
\newcommand{\Rrelbar}{\mathrel{\raise.75ex\hbox{\oalign{%
  $\scriptstyle\relbar$\cr
  \vrule width0pt height.5ex$\scriptstyle\relbar$}}}}
\def\leftrightarrowsfill@{\arrowfill@\leftrarrows\Rrelbar\lrightarrows}
\newcommand{\xleftrightarrows}[2][]{\ext@arrow 3399\leftrightarrowsfill@{#1}{#2}}
\DeclareMathOperator{\Int}{Int}
\newcommand{\Z}{\mathbb{Z}}
\newcommand{\DD}{\mathbb{D}}
\newcommand{\N}{\mathbb{N}}
\newcommand{\TT}{\mathfrak{T}}
\newcommand{\Ss}{\mathcal{S}}
\newcommand{\D}{\mathcal{D}}
\newcommand{\K}{\mathcal{K}}
\newcommand{\T}{\mathcal{T}}
\newcommand{\pd}{\partial}
\newcommand{\be}{\begin{enumerate}}
\newcommand{\ee}{\end{enumerate}}
\newcommand{\A}{\alpha}
\newcommand{\n}{\beta}
\newcommand{\eps}{\varepsilon}
\newcommand{\X}{\times}
\newcommand{\Ff}{\mathcal{F}}
\let\int\relax
\newcommand{\int}{\mathring}
\definecolor{darkpurple}{rgb}{.5,0,.5}
\definecolor{darkgreen}{rgb}{0,.5,0}
\begin{document}

\title{Bridge trisections and Seifert solids}

\author[Joseph]{Jason Joseph}
\address{Department of Mathematics\\Rice University\\Houston, TX, USA}
\email{jason.joseph@rice.edu}

\author[Meier]{Jeffrey Meier}
\address{Department of Mathematics\\Western Washington University\\Bellingham, WA, USA}
\email{jeffrey.meier@wwu.edu}

\author[Miller]{Maggie Miller}
\address{Department of Mathematics\\Stanford University\\Stanford, CA, USA}
\email{maggie.miller.math@gmail.com}

\author[Zupan]{Alexander Zupan}
\address{Department of Mathematics\\University of Nebraska-Lincoln\\Lincoln, NE, USA}
\email{zupan@unl.edu}

%\thanks{}

\begin{abstract}
	We adapt Seifert's algorithm for classical knots and links to the setting of tri-plane diagrams for bridge trisected surfaces in the 4--sphere.
	Our approach allows for the construction of a Seifert solid that is described by a Heegaard diagram.
	The Seifert solids produced can be assumed to have exteriors that can be built without 3--handles; in contrast, we give examples of Seifert solids (not coming from our construction) whose exteriors require arbitrarily many 3--handles.
	We conclude with two classification results.
	The first shows that surfaces admitting doubly-standard shadow diagrams are unknotted.
	The second says that a $b$--bridge trisection in which some sector contains at least $b-1$ patches is completely decomposable, thus the corresponding surface is unknotted.
	This settles affirmatively a conjecture of the second and fourth authors.
\end{abstract}

\maketitle

%%%%%%%%%%%%%%%%%%%%%%%%%%%%%%%%%%%%%%%%%%%%%%%%%%%%%%%%%%%%%%%%%%%
%%%%%%%%%%%%%%%%%%%%%%%%%%%%%%%%%%%%%%%%%%%%%%%%%%%%%%%%%%%%%%%%%%%
\section{Introduction}\label{sec:intro}
%%%%%%%%%%%%%%%%%%%%%%%%%%%%%%%%%%%%%%%%%%%%%%%%%%%%%%%%%%%%%%%%%%%
%%%%%%%%%%%%%%%%%%%%%%%%%%%%%%%%%%%%%%%%%%%%%%%%%%%%%%%%%%%%%%%%%%%

A {\emph{bridge trisection}} of a surface $\Ss$ in $S^4$ is a certain decomposition of $(S^4,\Ss)$ into three trivial disk systems $(B^4_1,\D_1),(B^4_2,\D_2),(B^4_3,\D_3)$ that can be encoded diagrammatically either as a triple of tangles called a \emph{tri-plane} diagram or as a corresponding \emph{shadow diagram}.
In this paper, we show how topological information about the surfaces $\Ss$ can be recovered from these diagrammatic representations.

In Section \ref{sec:seifert}, we give a version of Seifert's algorithm for bridge-trisected surfaces, showing how a tri-plane diagram can be used to produce a 3--manifold bounded by a connected surface $\Ss$ with normal Euler number zero.

\begin{reptheorem}{proc}
	If $\Ss$ is connected and $e(\Ss) = 0$, then there is a procedure to produce a Seifert solid for $\Ss$ that takes as input a tri-plane diagram for $\Ss$.
\end{reptheorem}

In Subsection \ref{subsec:getheegaard}, we give an explicit procedure for constructing a Heegaard diagram for such a 3--manifold when $\Ss\cong S^2$. As a corollary of the work in building Seifert solids, we recover a combinatorial proof of the existence of Seifert solids. We also show that certain bridge trisected surfaces are unknotted.

\begin{reptheorem}{cor:doublestandard}
	If a surface $\Ss$ has a doubly-standard shadow diagram, then $\Ss$ is unknotted.
\end{reptheorem}

In Section~\ref{sec:afss}, we give 4--dimensional analogs to the 3--dimensional concepts of free Seifert surfaces and canonical Seifert surfaces.
We call a Seifert solid \emph{canonical} if it is obtained from the procedure presented in Section~\ref{sec:seifert}, and we call a Seifert solid \emph{spinal} if its exterior in $S^4$ can be built without 3--handles.
We prove the following two results relating (and distinguishing) these concepts.

\begin{reptheorem}{thm:free}
	If a surface-knot $\Ss$ admits a Seifert solid, then it admits a canonical Seifert solid that is spinal.
\end{reptheorem}

In fact, modulo some additional, easily satisfied connectivity conditions, every canonical Seifert solid is spinal. The next result shows that some Seifert solids (in contrast to canonical Seifert solids and many standard examples) are ``far'' from being spinal.

\begin{reptheorem}{thm:not_free}
	Given any $n\in\N$, there exists a 2--knot $\K$ that bounds a Seifert solid $Y$ homeomorphic to $(S^1\times S^2)^\circ$ such that $S^4\setminus\nu(Y)$ requires at least $n$ 4--dimensional 3--handles.
\end{reptheorem}

Finally, in Section \ref{sec:c=b} we prove the following standardness result, affirmatively settling Conjecture~4.3 of~\cite{MeiZup_bridge1}.  

\begin{reptheorem}{thm:c=b}
	Let $\TT$ be a $(b;{\mathbf{c}})$--bridge trisection with $c_i=b_i-1$ for some $i\in\Z_3$.  Then, $\TT$ is completely decomposable, and the underlying surface-link is either the unlink of $\min\{c_i\}$ 2--spheres or the unlink of $\min\{c_i\}$ 2--spheres and one projective plane, depending on whether $|c_{i-1}-c_{i+1}|=1$ or $0$.
\end{reptheorem}

 The proof relies on theorems of Scharlemann and Bleiler-Scharlemann regarding planar surfaces in 3--manifolds~\cite{BleSch_proj,Sch_4crit}. The second and fourth authors previously handled this case when $c_i=b$ for some $i\in\Z_3$~\cite[Proposition~4.1]{MeiZup_bridge1}. 

%%%%%%%%%%%%%%%%%%%%%%%%%%%%%%%%%%%%%%%%%%%%%%%%%%%%%%%%%%%%%%%%%%%
\subsection*{Acknowledgements}
%%%%%%%%%%%%%%%%%%%%%%%%%%%%%%%%%%%%%%%%%%%%%%%%%%%%%%%%%%%%%%%%%%%

This paper began following discussions at the workshop \emph{Unifying 4--Dimensional Knot Theory}, which was hosted by the Banff International Research Station in November 2019, and the authors would like to thank BIRS for providing an ideal space for sparking collaboration. We are grateful to Masahico Saito for sharing his interest in adapting Seifert's algorithm to bridge trisections and motivating this paper. The authors would like to thank Rom\'an Aranda, Scott Carter, and Peter Lambert-Cole for helpful conversations.  JJ was supported by MPIM during part of this project, as well as NSF grants DMS-1664567 and DMS-1745670. JM was supported by NSF grants DMS-1933019 and DMS-2006029. MM was supported by MPIM during part of this project, as well as NSF grants DGE-1656466 (at Princeton) and DMS-2001675 (at MIT) and a research fellowship from the Clay Mathematics Institute (at Stanford).  AZ was supported by MPIM during part of this project, as well as NSF grants DMS-1664578 and DMS-2005518.

%%%%%%%%%%%%%%%%%%%%%%%%%%%%%%%%%%%%%%%%%%%%%%%%%%%%%%%%%%%%%%%%%%%
%%%%%%%%%%%%%%%%%%%%%%%%%%%%%%%%%%%%%%%%%%%%%%%%%%%%%%%%%%%%%%%%%%%
\section{Preliminaries}\label{sec:prelim}
%%%%%%%%%%%%%%%%%%%%%%%%%%%%%%%%%%%%%%%%%%%%%%%%%%%%%%%%%%%%%%%%%%%
%%%%%%%%%%%%%%%%%%%%%%%%%%%%%%%%%%%%%%%%%%%%%%%%%%%%%%%%%%%%%%%%%%%

We work in the smooth category.  This section includes an abbreviated introduction to the concepts relevant to this paper, but the interested reader is encouraged to consult the reference \cite{gaykirby} for further information about 4--manifold trisections and the references \cite{MeiZup_bridge1} and \cite[Section~2]{JMMZ} for more detailed discussions of bridge trisections.  We limit our work here to surfaces in $S^4$, but there is also a theory of bridge trisections in arbitrary 4--manifolds; see~\cite{MeiZup_bridge2}.

%%%%%%%%%%%%%%%%%%%%%%%%%%%%%%%%%%%%%%%%%%%%%%%%%%%%%%%%%%%%%%%%%%%
\subsection{Bridge trisections}
%%%%%%%%%%%%%%%%%%%%%%%%%%%%%%%%%%%%%%%%%%%%%%%%%%%%%%%%%%%%%%%%%%%

Let $\Ss$ be an embedded surface in $S^4$, let $b$ be a positive integer, and let $\mathbf{c} = (c_1,c_2,c_3)$ be a triple of positive integers.  A \emph{$(b;\mathbf{c})$--bridge trisection} of $(S^4,\Ss)$ is a decomposition
\[ (S^4,\Ss) = (X_1,\D_1) \cup (X_2,\D_2) \cup (X_3,\D_3)\]
such that
\begin{enumerate}
\item Each $\D_i$ is a collection of $c_i$ boundary-parallel disks in the 4--ball $X_i$;
\item Each intersection $\T_i = \D_{i-1} \cap \D_i$ a boundary-parallel tangle in the 3--ball $H_i = X_{i-1} \cap X_i$ (with indices considered mod 3);
\item The triple intersection $\D_1 \cap \D_2 \cap \D_3$ is a collection of $b$ points in the 2--sphere $\Sigma = X_1 \cap X_2 \cap X_3$.
\end{enumerate}
In~\cite{MeiZup_bridge1}, it was proved that every surface $\Ss$ admits a $(b;\mathbf{c})$--bridge trisection for some $(b;\mathbf{c})$.  We choose orientations so that $\partial(X_i,\D_i) = (H_i,\T_i) \cup (\overline{H_{i+1}},\overline{\T_{i+1}})$.  When we wish to be succinct, we use $\TT$ to represent a bridge trisection, with components labeled as above.

%%%%%%%%%%%%%%%%%%%%%%%%%%%%%%%%%%%%%%%%%%%%%%%%%%%%%%%%%%%%%%%%%%%
\subsection{Diagrams for bridge trisections}
%%%%%%%%%%%%%%%%%%%%%%%%%%%%%%%%%%%%%%%%%%%%%%%%%%%%%%%%%%%%%%%%%%%

The existence of bridge trisections gives rise to a new diagrammatic theory for surfaces in $S^4$, using an object called a \emph{tri-plane diagram}, a triple $\DD = (\DD_1,\DD_2,\DD_3)$ of trivial planar diagrams with the additional condition that each $\DD_i \cup\overline\DD_{i+1}$ is a classical diagram for an unlink.  In~\cite{MeiZup_bridge1}, it was shown that every tri-plane diagram determines a bridge trisection $\TT$.  Conversely, given a bridge trisection $\TT$ of $(S^4,\Ss)$, we can choose a triple of disks $E_i \subset H_i$ with common boundary and project the tangles $\T_i$ onto $E_i$ to obtain a tri-plane diagram.  Of course, the choices of disks and projections are not unique, but any two tri-plane diagrams corresponding the same bridge trisection $\TT$ are related by a finite collection of \emph{interior Reidemeister moves} and \emph{mutual braid transpositions}, while any two bridge trisections $\TT$ and $\TT'$ for the same surface $\Ss$ are related by \emph{perturbation} and \emph{deperturbation} moves.

In addition, bridge trisections yield another type of diagram:  Each trivial tangle $\T_i$ can be isotoped rel-boundary into the surface $\Sigma$, yielding a triple $(A,B,C)$ of pairwise disjoint collections of arcs called a \emph{shadow diagram}, which has the property that $\pd A = \pd B = \pd C$, and the pairwise unions of any two of the tangles $\T_A$, $\T_B$, $\T_C$ determined by the arcs are unlinks.  As with tri-plane diagrams, any shadow diagram determines a bridge trisection.  Further details about shadow diagrams can be found in~\cite{MTZ_graph}.

Here we consider special types of shadow diagrams.  We say that a pair of collections of arcs in a shadow diagram is \emph{standard} if their union is embedded.  Any bridge trisection admits a shadow diagram $(A,B,C)$ in which one of the pairs is standard. If two or three pairs of shadows in a shadow diagram $(A,B,C)$ are standard, then we say that $(A,B,C)$ is {\emph{doubly-standard}} or {\emph{triply-standard}}, respectively.  Theorem~\ref{cor:doublestandard} says that doubly-standard (and thus triply-standard) diagrams always describe unknotted surfaces.

%%%%%%%%%%%%%%%%%%%%%%%%%%%%%%%%%%%%%%%%%%%%%%%%%%%%%%%%%%%%%%%%%%%
\subsection{Unknotted surfaces}
\label{subsec:unknotted}
%%%%%%%%%%%%%%%%%%%%%%%%%%%%%%%%%%%%%%%%%%%%%%%%%%%%%%%%%%%%%%%%%%%

In this subsection, we review standard notions of unknottedness for surfaces in $S^4$. A closed, connected, surface $\Ss$ in $S^4$ is \emph{unknotted} if it bounds an embedded 3--dimensional handlebody $H \subset S^4$.  For nonorientable surfaces, the definition is slightly more involved.  We define the two unknotted projective planes, $P_{\pm}$, to be the two standard projective planes in $S^4$, pictured via their tri-plane diagrams in Figure~\ref{fig:unknottedrp2}, where $e(P_{\pm}) = \pm 2$.

\begin{figure}[htp]
	\centering
	\includegraphics[width=.8\linewidth]{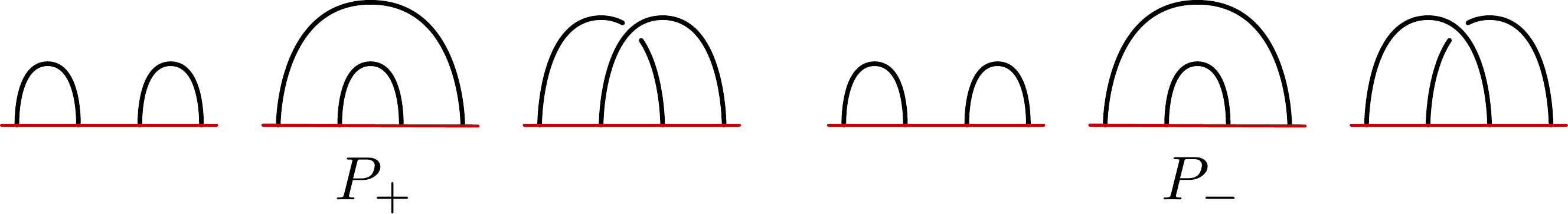}
	\caption{Tri-plane diagrams for $P_+$ and $P_-$.}
	\label{fig:unknottedrp2}
\end{figure}

In general, for a nonorientable surface $\Ss$, we say that $\Ss$ is \emph{unknotted} if $\Ss$ is isotopic to a connected sum of some number of copies of $P_+$ and $P_-$.  See~\cite[Remark 2.6]{JMMZ} for a detailed discussion of the orientation conventions used here.

%%%%%%%%%%%%%%%%%%%%%%%%%%%%%%%%%%%%%%%%%%%%%%%%%%%%%%%%%%%%%%%%%%%
%%%%%%%%%%%%%%%%%%%%%%%%%%%%%%%%%%%%%%%%%%%%%%%%%%%%%%%%%%%%%%%%%%%
\section{Seifert solids}\label{sec:seifert}
%%%%%%%%%%%%%%%%%%%%%%%%%%%%%%%%%%%%%%%%%%%%%%%%%%%%%%%%%%%%%%%%%%%
%%%%%%%%%%%%%%%%%%%%%%%%%%%%%%%%%%%%%%%%%%%%%%%%%%%%%%%%%%%%%%%%%%%

Classical results of Gluck~\cite{Gluck62} (resp., Gordon-Litherland~\cite{gordonlitherland}) assert that every orientable surface $\Ss$ (resp., surface $\Ss$ with $e(\Ss) = 0$) in $S^4$ bounds an embedded 3--manifold, called a \emph{Seifert solid} in the orientable case.  In the setting of broken surface diagrams, Carter and Saito provided a procedure that in many respects mimics Seifert's algorithm for classical knots~\cite{SeifertAlgorithm}.  In this section, we describe an extension of Seifert's algorithm that takes an oriented tri-plane diagram $\DD$ and produces a Seifert solid whose intersection with $\pd X_i$ agrees with the classical Seifert's algorithm performed on the oriented unlink diagram $\DD_i \cup \overline\DD_{i+1}$.  We also obtain alternative proofs of the theorems of Gluck and Gordon-Litherland mentioned above.

%%%%%%%%%%%%%%%%%%%%%%%%%%%%%%%%%%%%%%%%%%%%%%%%%%%%%%%%%%%%%%%%%%%
\subsection{Existence of Seifert solids}
\label{subsec:existence}
%%%%%%%%%%%%%%%%%%%%%%%%%%%%%%%%%%%%%%%%%%%%%%%%%%%%%%%%%%%%%%%%%%%

Given a spanning surface $F$ for an unlink $U$, we define the \emph{cap-off} $\mathcal{F}$ of $F$ to be the closed surface $\mathcal{F} \subset S^4$ obtained by gluing a collection of trivial disks in $B^4_-$ to $F$ along $U$. (There is a unique such choice of disks up to isotopy rel-boundary in $B^4_-$ by e.g. \cite{kss} or \cite{liv}.)  Let $F_+ \subset S^3$ denote the M\"obius band bounded by the unknot so that $F_+$ contains a positive half-twist and has boundary slope $+2$, and let $F_- \subset S^3$ denote the M\"obius band bounded by the unknot with a negative half-twist and boundary slope $-2$.  For $n > 0$, let $F_n$ be the connected surface obtained by attaching $n-1$ trivial bands to the split union of $n$ copies of $F_+$; that is, $F_n$ is obtained by taking the boundary connected sum of $n$ copies of $F_+$.  For $n < 0$, let $F_n$ be obtain by taking the boundary connected sum of $(-n)$ copies of $F_-$. Finally, let $F_0$ be the disk bounded by the unknot in $S^3$.  Additionally, let $\Ff_n$ be the cap-off of $F_n$.  In Figure \ref{fig:unknottedrp2}, the negative M\"obius band is shown to cap off into $B^4_+$ to obtain $P_+$. (See also~\cite[Figure~2]{JMMZ}.)  Here, we are capping off into $B^4_-$, so that by definition the cap-off $\mathcal{F}_{-1}$ of the negative M\"obius band $F_-$ is $P_-$.  In contrast, the cap-off $\mathcal{F}_1$ of the positive M\"obius band $F_+$ is $P_+$.  (Recall that $P_+$ and $P_-$ denote the two unknotted projective planes in $S^4$; see Subsection~\ref{subsec:unknotted}.)  It follows that
$$\Ff_n=
\begin{cases}\text{a connected sum of $n$ copies of $P_+$}, & \text{ if } n>0\\\text{a connected sum of $-n$ copies of $P_-$} & \text{ if } n<0\\\text{an unknotted 2--sphere}& \text{ if } n=0
\end{cases},$$
The intent of the cap-off notation is the emphasize the way in which $\Ff_n$ can be obtained from a specific surface in $S^3$, which will be useful in the rest of this section -- especially given the following lemma.

\begin{lemma}\label{compress}
Every incompressible spanning surface $F$ for the unknot is isotopic to $F_n$ for some $n \in \Z$.
\end{lemma}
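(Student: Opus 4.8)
The plan is to argue that an incompressible spanning surface $F$ for the unknot in $S^3$ is determined, up to isotopy, by its genus-type data, which here reduces to a single integer invariant. First I would recall that a spanning surface $F$ for the unknot $U$ has $F \cap \nu(U)$ an annulus or M\"obius band according to whether $F$ is orientable or not near its boundary; collapsing $\nu(U)$, we may as well think of $F$ as a surface with one boundary component on the boundary of a standard ball $B^3 = S^3 \setminus \nu(U)$, meeting $\partial B^3$ in a single circle. Since $U$ is the unknot, $\partial B^3 = S^2$ and $F \cap B^3$ is a surface with connected boundary in a $3$--ball. The key point is that a spanning surface for the unknot is incompressible in the exterior $S^3 \setminus \nu(U)$ (a solid torus) if and only if, after this identification, $F$ meets the solid torus in an incompressible surface; and an incompressible surface in a solid torus with boundary a single curve on the boundary torus is either a meridian disk or a boundary-parallel annulus-type piece, but here we additionally need the boundary slope to be an integer $n$ (since $F$ spans $U$, the slope is well-defined and lies in $\Z$ as it is the boundary of a surface in the ball complement).

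The main steps, in order, would be: (1) Reduce to understanding $F$ cut along a maximal collection of compressions in $S^3$; since $F$ is assumed incompressible, there are none, so $F$ is already ``taut.'' (2) Observe that the boundary slope of $F$ on $\partial \nu(U)$ is a well-defined integer $n$ — the framing induced by $F$ relative to the Seifert framing — and that this integer is an isotopy invariant of $F$. (3) Show that if the slope is $n$, then $F$ is obtained from the spanning surface $F_n$ (the boundary connected sum of $|n|$ copies of $F_\pm$) by possibly attaching trivial handles; but attaching a trivial handle to a spanning surface creates a compressing disk, contradicting incompressibility. Hence $F$ has the minimal genus among spanning surfaces of slope $n$, and a standard innermost-disk/outermost-arc argument in the solid torus $S^3 \setminus \nu(U)$ shows any two incompressible spanning surfaces of the same slope are isotopic. (4) Conclude $F \simeq F_n$.

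The hard part will be Step (3): pinning down that incompressibility forces both minimal genus \emph{and} uniqueness of the isotopy class for a fixed boundary slope. The genus bound follows because a non-minimal spanning surface of fixed slope must contain a nonseparating or inessential handle that can be compressed. For uniqueness, I would put $F$ and $F_n$ in general position, and use the incompressibility of both together with the fact that the ambient complement of $U$ is a solid torus (so it is irreducible and has very constrained incompressible surfaces) to cancel all intersection curves via innermost-disk and boundary-compression moves; the remaining parallel structure then gives an ambient isotopy carrying $F$ to $F_n$. One technical subtlety is the nonorientable case, where ``incompressible'' must be interpreted as geometrically incompressible (no compressing disk at all) and the boundary slope $\pm 2k$ is even; here the $F_\pm$ summands account for the sign and magnitude, and the same cut-and-paste argument applies once one works with a tubular neighborhood of $F$ rather than $F$ itself.
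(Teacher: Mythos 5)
There is a genuine gap, concentrated exactly where you predict: Step (3). Your claim that ``any two incompressible spanning surfaces of the same slope are isotopic'' via ``a standard innermost-disk/outermost-arc argument'' is essentially the entire content of the lemma, and the standard argument does not close on its own. Innermost disks do remove circles of $F \cap F_0$ (using incompressibility and irreducibility of the solid torus), and an outermost arc of $F \cap F_0$ in the meridian disk $F_0$ does give a boundary-compression of $F$; but after boundary-compressing you must (i) identify the resulting lower-genus surface, which requires controlling its new boundary slope and invoking an inductive hypothesis, and (ii) show that $F$ is recovered from it by a band attachment that is \emph{unique} up to isotopy. Point (ii) is the crux of the paper's proof: the band is a boundary-parallel band determined by a spanning arc of the annulus $\partial V \setminus \partial F'$, and all such arcs differ by Dehn twists about $\partial F'$, hence yield isotopic surfaces. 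Without an argument of this kind your cut-and-paste does not terminate in an isotopy. Likewise, your genus assertion (``a non-minimal spanning surface of fixed slope must contain a handle that can be compressed'') is circular as stated; the paper instead quotes Tsau's result that an incompressible spanning surface of boundary slope $2k$ has nonorientable genus exactly $|k|$, which is what pins down the number of boundary-compressions the induction performs.

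A secondary but real error: in your first paragraph you assert that an incompressible surface in a solid torus with a single boundary curve is ``either a meridian disk or a boundary-parallel annulus-type piece.'' That is the classification of \emph{two-sided} incompressible surfaces; it fails for one-sided surfaces, and the surfaces $F_n$ with $n \neq 0$ are themselves counterexamples, so taken literally this claim would contradict the lemma you are trying to prove. Note also that the boundary slope of a spanning surface for the unknot is a $(2k,1)$-curve, so the invariant is the even integer $2k$ (you catch this only at the end). Your overall strategy --- reduce to the solid torus exterior, take the boundary slope as the invariant, and compare with $F_n$ by intersecting with the meridian disk --- is the same as the paper's; what is missing is the execution of the induction and, above all, the uniqueness-of-band step.
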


\begin{proof}
First, we argue that $F_n$ is incompressible for all $n$.  This follows from \cite{tsau}, but we include a proof here.  Certainly, $F_0$ and $F_{\pm 1}$ are incompressible, since a compression increases Euler characteristic by two.  Suppose now that $F_n$ is compressible for some $n > 1$, and let $F_n'$ be the component of the surface obtained by compressing $F_n$ such that $\pd F_n' = \pd F_n$.  In addition, let $\Ff_n' \subset S^4$ be the cap-off of $F_n'$.  Then the embedded surface $\Ff_n$ can be obtained by from $\Ff_n'$ by a 1--handle attachment, and thus $e(\Ff_n') = e(\Ff_n) = 2n$.  However, since the nonorientable genus of $\Ff_n'$ is strictly less than $n$, this contradicts the Whitney--Massey Theorem (see discussion in \cite{JMMZ}).  We conclude that $F_n$ is incompressible.

On the other hand, suppose that $F$ is an arbitrary incompressible spanning surface for the unknot $U$.  The exterior of $U$ is a solid torus $V$, and every simple closed curve $c \subset \pd V$ is homotopic to a $(p,q)$--curve, where a $(0,1)$--curve is the boundary of a meridian disk of $V$ and a $(1,0)$--curve is the boundary of a meridian disk of $N(U)$. The boundary of $F$ is a $(2k,1)$--curve for some integer $k$. (The spanning surface $F$ intersects the disk bounded by $U$ in some number of arcs, the endpoints of which correspond to the intersections of the $(p,q)$--curve with the $(0,1)$--curve.) If $F$ is orientable, then it is well-known that $F$ is isotopic to the meridian disk $F_0$.  

Suppose that $F$ is nonorientable.  By~\cite[Corollary 12]{tsau}, the nonorientable genus of $F$ is equal to $|k|$.  Assuming that $\pd F$ and $\pd F_0$ meet efficiently, isotope $F$ so that it intersects $F_0$ minimally.  By standard cut-and-paste arguments, an arc of $F \cap F_0$ which is outermost in $F_0$ gives rise to a boundary-compressing disk $\Delta$ for $F$. Since $\pd F$ and $\pd F_0$ meet efficiently, the result $F'$ of boundary-compressing $F$ along $\Delta$ has a single boundary component and nonorientable genus $k-1$.  Reversing the process, we see that $F$ can be obtained from $F'$ by attaching a boundary-parallel band to $F'$ along opposite sides of $\pd F'$.  Note that $\pd V \setminus \pd F'$ is an annulus and the band is determined by a spanning arc.  Working rel-boundary, all choices of spanning arcs are related by Dehn twists about $\pd F'$, and so it follows that up to isotopy, there is a unique band taking $F'$ to $F$.

Finally, we claim that $F$ is isotopic to $F_k$, and we prove this fact by inducting on $k$.  If $k = \pm 1$, then $F$ has genus one and is obtained from the disk $F'=F_0$ by a single boundary tubing.  By the above argument, there is precisely one way to do this, and thus $F = F_{\pm 1}$.  Now, suppose that $k > 1$ and the claim holds for $j = k-1$.  As above, isotope $F$ to meet $F_0$ minimally, and since $k > 1$, there are at least two arcs $a_0$ and $a_1$ of $F \cap F_0$ that are outermost in $F_0$.  Let $\ell$ be a $(0,1)$-curve that meets $\pd F$ in a single point contained in $a_0$.  Then, $a_1$ gives rise to a boundary-compressing disk $\Delta_1$ and the result $F'$ of boundary-compressing $F$ along $\Delta_1$ also satisfies $|\pd F' \cap \ell| = 1$, since the modification was carried out away from the arc $a_0$.  We conclude that $F'$ has genus $k-1$ and boundary slope $(2(k-1),1)$.  By induction $F' = F_{k-1}$, and since there is a unique way to obtain $F$ from $F'$ by boundary-tubing, it follows that $F = F_k$.  The case $k < -1$ follows symmetrically, completing the proof of the lemma.
\end{proof}

In the next proposition, we use Lemma~\ref{compress} to understand the cap-off of any spanning surface $F$ for an unlink in $S^3$.

\begin{proposition}\label{spancap}
Let $F$ be a spanning surface for an unlink $U$ in $S^3$.
\be
\item If every component of $\pd F$ has slope $0$, then the cap-off $\Ff$ bounds a (possibly nonorientable, possibly disconnected) handlebody $V \subset B^4$ such that $V \cap \pd B^4 = F$.
\item\label{signitem} The normal Euler number $e(\Ff)$ is equal to the sum of the slopes of the boundary components of $F$.  
\item The cap-off $\Ff$ is a split union of unknotted surfaces in $S^4$.
\ee
\end{proposition}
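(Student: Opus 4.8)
The plan is to reduce all three statements to Lemma~\ref{compress} by first analyzing $F$ up to compression.

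\emph{A common reduction.} I would begin by compressing $F$ along disks in $S^3$, along two--sided essential curves, until an incompressible spanning surface $F''$ for $U$ is reached; since $-\chi$ strictly decreases and is bounded below, this terminates. A standard innermost-disk argument using incompressibility then shows that each $2$--sphere splitting $U$ can be isotoped off $F''$, so $F''$ is a split union of incompressible spanning surfaces for the unknots $U_i$, and by Lemma~\ref{compress} we get $F''\cong\bigsqcup_i F_{n_i}$, with $U_i$ bounding the piece $F_{n_i}$. Undoing the compressions exhibits $F$ as obtained from $F''$ by attaching a sequence of $1$--handles, each whose core is a short arc in $S^3$ transverse to one of the compressing disks. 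Since all of this happens away from a collar of $\partial F$, the slope of $U_i$ is unchanged and hence equals the slope $2n_i$ of $F_{n_i}$. Finally, because the capping disks lie in $B^4_-$ while the uncompressions take place in $S^3$, capping off commutes with the splitting and with these $1$--handle attachments, so $\mathcal{F}$ is obtained from $\mathcal{F}''=\bigsqcup_i\mathcal{F}_{n_i}$ by attaching the same $1$--handles.

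\emph{Parts (3) and (2).} The crux, which I expect to be the main obstacle, is to show these $1$--handles are \emph{standard}. The key point is that for any cap-off the inclusion-induced map $\pi_1(S^3\setminus F'')\to\pi_1(S^4\setminus\mathcal{F}'')$ is trivial: since $F''\subset\partial B^4_+$ and $\mathcal{F}''\cap B^4_+=F''$, it factors through $\pi_1(B^4_+\setminus F'')=1$. Hence any arc in $S^3$ with endpoints on $F''$ is isotopic, rel endpoints, in $S^4\setminus\mathcal{F}''$ to a standard arc; the only remaining datum is the framing of the tube, and the two relevant framings give a standardly embedded untwisted, resp.\ once-twisted, tube. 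Attaching such a tube to a split union of unknotted surfaces either forms a connected sum of two of the pieces or attaches a trivial handle to one of them, and in both cases the result is again a split union of unknotted surfaces; doing the tubes one at a time---each intermediate surface being again a cap-off, so the argument persists---proves part~(3). For part~(2), the normal Euler number is additive under connected sum and is unchanged by a trivial handle addition (the unknotted Klein bottle $P_+\#P_-$ has $e=0$), so $e(\mathcal{F})=e(\mathcal{F}'')=\sum_i e(\mathcal{F}_{n_i})=\sum_i2n_i$, which is the sum of the boundary slopes of $F$; the sign is fixed by $\mathcal{F}_1=P_+$ and $e(P_+)=2$.

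\emph{Part (1).} If every boundary slope of $F$ is $0$, then each $n_i=0$, so $F''$ is a split union of disks $F_0^{(i)}$ and $\mathcal{F}''$ a split union of unknotted $2$--spheres. Here I would build $V$ directly inside the $4$--ball in which $\mathcal{F}$ is capped off: writing that ball as $S^3\times[0,1]/(S^3\times\{1\})$ with $F\subset S^3\times\{0\}$, let $V$ be the union of a collar on $F$ in which $3$--dimensional $2$--handles realizing the compressions are stacked (so that at some level the cross-section becomes $F''$), together with, glued along each disk $F_0^{(i)}$ of that cross-section, the $3$--ball bounded by the unknotted $2$--sphere formed by $F_0^{(i)}$ and a boundary-parallel capping disk for $U_i$. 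These pieces can be made disjoint, so $V$ is embedded; it is a handlebody (possibly nonorientable, possibly disconnected), as one checks from the dual handle decomposition; and by construction $\partial V$ is $F$ together with a system of boundary-parallel capping disks, i.e.\ $\partial V=\mathcal{F}$, while $V\cap\partial B^4=F$. The case where $\mathcal{F}$ is capped into the other ball follows by reflection.
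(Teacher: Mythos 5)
Your argument is correct, and for part (3) it takes a genuinely different route from the paper. The paper builds a standard model from the outside in: it places split incompressible pieces $F_i$ in disjoint balls, surgers along arcs to match the homeomorphism type and slopes of $F$, and then invokes Livingston's theorem~\cite{liv} that spanning surfaces of the same type become isotopic rel boundary once pushed into $B^4$ --- that citation is doing exactly the work of standardizing the tubes. You instead compress $F$ down to $F''$, identify the pieces via Lemma~\ref{compress}, and standardize the uncompression tubes by hand, using the light-bulb--type observation that $\pi_1(S^3\setminus F'')\to\pi_1(S^4\setminus\mathcal{F}'')$ factors through the simply connected $B^4_+\setminus F''$, so arcs are isotopic rel endpoints in the complement and only a $\Z/2$ of framings survives in dimension four. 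Your approach is more self-contained (it replaces an external isotopy theorem with an elementary general-position argument and makes the reason for unknottedness transparent); the paper's is shorter and sidesteps the bookkeeping of doing tubes one at a time. Two spots in your write-up deserve an explicit sentence: (i) compressing along separating curves can create closed components of $F''$, which by incompressibility in $S^3$ are split unknotted $2$--spheres --- harmless, but your identification $F''\cong\bigsqcup_i F_{n_i}$ should account for them; and (ii) the framing claim should be justified by noting that $2$--plane subbundles of the normal $D^3$--bundle over an arc, rel endpoints, form a torsor over $\pi_1(\RP^2)\cong\Z/2$, with the two classes yielding a torus or a $P_+\# P_-$ summand (both with trivial contribution to $e$, which is what makes your part (2) go through, since the tubes are interior and leave boundary slopes unchanged). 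With those additions the induction ``one tube at a time'' closes up, since the standardizing isotopies can be supported away from the remaining arcs by general position.
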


\begin{proof}
	Suppose $F$ and $F'$ are two spanning surfaces for an unlink $U$ in $S^3$ such that $F'$ is isotopic relative to $U$ to the surface obtained by surgering $F$ along a compressing disk $D$ for $F$.
	Then there is a compression body $C\subset S^3\times[0,1]$ such that
	\begin{itemize}
		\item $C\cap(S^3\times\{1\})=F\times\{1\}$, 
		\item $C\cap(S^3\times\{0\})=F'\times\{0\}$, and 
		\item $\partial C = (F\times\{1\})\cup(\overline{F'\times\{0\}})\cup(U\times[0,1])$,
	\end{itemize}
	and $C$ has a single critical point (of index 1) with respect to the Morse function $S^3\times[0,1]\to[0,1]$, which we assume lies in $S^3\times\left\{\frac{1}{2}\right\}$.
	Note that $C$ is a product cobordism above and below $S^3\times\left\{\frac{1}{2}\right\}$.
	
	Any spanning surface $F$ for $U$ can be reduced to $F'$, a union of 2--spheres and incompressible spanning surfaces for components of $U$ via a sequence of compressions and isotopies.
	If each component of $\partial F$ has slope~0, then $F'$ is a collection of disks and spheres.
	Applying the compression body construction described above for each compression taking $F$ to $F'$ and stacking the results, we get a compression body $C$ co-bounded by $F$ and $F'$.
	Since $F'$ is a collection of disks and spheres, there is a handlebody with boundary $\Ff = F\cup \D$, where $\D = \overline{F'}\cup(U\times[0,1])$ is a collection of properly embedded disks in $B^4$: simply cap-off the sphere components of $C$ with 3--balls whose interiors are pushed sufficiently deep into $B^4$.
	This handlebody is non-orientable (resp., disconnected) if and only if $F$ is.
	This establishes part (1).

	Let $F$ be any spanning surface for an unlink $U = \bigsqcup_{i=1}^nU_i$.
	Let $B = \bigsqcup_{i=1}^nB_i$ be a collection of disjoint 3--balls with $U_i\subset\Int(B_i)$.
	Let $F' = \bigsqcup_{i=1}^nF_i$ be a split union of incompressible spanning surfaces for the components of $U$, with $F_i\subset\Int(B_i)$, so that the slopes of $F$ and $F'$ agree at each component of $U$.
	Let $F''$ be the result of surgering $F'$ along a collection of arcs so that $F''$ and $F$ have the same homeomorphism type relative to $U$; moreover, assume that every arc of the collection intersects each component of $\partial B$ in at most one point.
	It follows that $F''$ decomposes as a split union of connected sums of surfaces, each summand of which is either a torus or an incompressible spanning surface for an unknot.
	Therefore, the cap-off $\Ff''$ is the split union of connected sums of surfaces, each summand of which is an unknotted surface in $S^4$.
	Livingston showed that $F$ and $F''$ are isotopic rel-boundary in $B^4$~\cite{liv}.
	It follows that the cap-off $\Ff$ will isotopic to the cap-off $\Ff''$, which completes the proof of part (3).
	Since (2) holds for $\mathcal{F}_1$ and $\mathcal{F}_{-1}$, and since the normal Euler number is additive under connected sum, part (2) follows, as well.
\end{proof}

Recall that a shadow diagram is doubly-standard if two of the pairings of arcs yield embedded curves.  We can use Proposition~\ref{spancap} to obtain the following classification result for doubly-standard diagrams.

\begin{theorem}\label{cor:doublestandard}
If $\Ss$ has a doubly-standard shadow diagram, then $\Ss$ is unknotted.
\end{theorem}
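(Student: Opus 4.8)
The plan is to exploit the fact that a doubly-standard shadow diagram gives us two of the three pairwise tangle-unions sitting as embedded curves on the central sphere $\Sigma$, and then to cap off the associated spanning surfaces using Proposition~\ref{spancap}. Concretely, suppose $(A,B,C)$ is a doubly-standard shadow diagram for $\Ss$, say with the pairs $(A,B)$ and $(B,C)$ both standard. Since $A\cup B$ and $B\cup C$ are each embedded collections of closed curves on $\Sigma$, they bound (disjoint, after a small isotopy) subsurfaces of $\Sigma$; more to the point, for each of the sectors $i$ the tangle $\T_i$ together with its neighbor closes up to an unlink $U_i$ in $\pd X_i \cong S^3$, and the standardness of two of the pairs means that two of these unlinks $U_i$ bound obvious spanning surfaces $F_i \subset \Sigma \subset \pd X_i$ obtained from the embedded shadow curves (pushed slightly to one side). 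First I would pin down exactly which subsurface of $\Sigma$ to use and check that its boundary slopes, computed in $\pd X_i$, are all $0$: this should follow because the shadow arcs lie on $\Sigma$ and the framing they inherit is the surface framing, which is the $0$-framing relative to the bridge sphere structure.

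Next, I would apply Proposition~\ref{spancap}(1) to each of these two spanning surfaces $F_i$ (whose boundary slopes are all $0$) to conclude that the cap-off $\Ff$ bounds a handlebody $V_i \subset X_i$ with $V_i \cap \pd X_i = F_i$. Here one must be careful: the cap-off in Proposition~\ref{spancap} is taken in a standard $B^4$ with $F \subset S^3 = \pd B^4$, whereas I want to build a $3$-manifold inside one of the trisection $4$-ball pieces $X_i$ whose boundary-intersection is the prescribed surface $F_i$; but these are the same situation, since $(X_i,\D_i)$ is a trivial disk system and the disks $\D_i$ are boundary-parallel, so the cap-off disks can be taken to be exactly $\D_i$. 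The upshot is that $\Ss$ itself is the boundary of a $3$-manifold built from $V_i$ together with the product region between $F_i$ and $\Ss$ in $X_i$. Alternatively, and perhaps more cleanly, I would use part (3) of Proposition~\ref{spancap}: the cap-off of each $F_i$ is a split union of \emph{unknotted} surfaces, and the bridge trisection assembled from a doubly-standard diagram should be recognizable as the bridge trisection of such a split union, forcing $\Ss$ to be unknotted directly.

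The cleanest route, I expect, is the following. A shadow diagram in which two pairs are standard can be \emph{destabilized} or directly recognized: standardness of a pair means one of the disk systems $\D_i$ can be isotoped into $\Sigma$, and with two such pairs the whole surface $\Ss$ decomposes along $\Sigma$ into pieces that are visibly $\D_j$ together with product regions. I would make this precise by showing the tri-plane diagram (or shadow diagram) is equivalent via the allowed moves to one in which each $\DD_i$ is a trivial (split) diagram with no crossings between distinct ``standard'' strands, i.e.\ a diagram of the unlink of spheres and unknotted projective planes; then invoke the definition of unknottedness from Subsection~\ref{subsec:unknotted}. The Euler number bookkeeping --- determining whether a projective-plane summand appears and with which sign --- is handled by Proposition~\ref{spancap}(2), reading off $e(\Ss)$ as the sum of boundary slopes, which in the doubly-standard (slope-$0$ along $\Sigma$) picture is $0$ unless a genuine half-twisted band forces a $\pm 2$ contribution.

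The main obstacle, and the step requiring real care, is the \emph{framing/slope computation}: verifying that the curves $A\cup B$ and $B\cup C$, viewed in the respective $S^3 = \pd X_i$, really do bound the spanning surfaces coming from $\Sigma$ with all boundary slopes equal to $0$ (so that Proposition~\ref{spancap}(1) applies and gives an honest handlebody rather than something we must further manipulate). This is a local computation comparing the framing of a shadow arc induced by $\Sigma$ with the framing induced by the bridge structure; it is routine but is where a nonorientable half-twist could sneak in and change the conclusion from ``handlebody'' to ``connected sum with a $P_\pm$,'' which is exactly the dichotomy already visible in the statement of Proposition~\ref{spancap}. Once the slopes are confirmed to be $0$ (or the exceptional $\pm2$ contributions are accounted for), the rest is assembling the handlebodies $V_i$ across $\Sigma$ and citing the definition of unknottedness.
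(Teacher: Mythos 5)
Your proposal circles the right tool (Proposition~\ref{spancap}) but never assembles the one step that makes the argument work. The paper's proof does \emph{not} treat the two standard pairs $(A,B)$ and $(B,C)$ in their two separate sectors and cap each off; instead it works entirely inside the single $3$--sphere $\partial X_3 = H_+\cup_\Sigma H_-$. After arc slides making $A'\cup B$ and $B\cup C'$ bound disjoint disks in push-offs $\Sigma_\pm$ (a point you skip: an embedded union of shadows can have nested components, so the disks need not be disjoint without slides), one isotopes both disk systems so that $D_1\subset H_+$ is a trivial disk system for $\T_1\cup B$ and $D_2\subset H_-$ is one for $B\cup\T_3$, with $D_1\cap D_2 = B$. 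Then $F=D_1\cup D_2$ is a spanning surface in $S^3=\partial X_3$ for the unlink $\T_1\cup\T_3$, and $\Ss = D_1\cup D_2\cup\D_3$ is literally the cap-off $\Ff$ of $F$; Proposition~\ref{spancap}(3) says any such cap-off is a split union of unknotted surfaces, and we are done. Your plan of producing handlebodies $V_i\subset X_i$ in two sectors and ``assembling'' them does not reach the conclusion: at best it contributes two of three pieces of a Seifert solid, a union of handlebodies glued along boundary subsurfaces need not be a handlebody, and bounding a $3$--manifold is far weaker than unknottedness (especially in the nonorientable case, where unknottedness means isotopy to a sum of $P_\pm$'s, not bounding anything).

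Two further points. First, your ``main obstacle'' -- the slope-$0$ verification needed for Proposition~\ref{spancap}(1) -- is a red herring: the proof only needs part (3), which applies to an arbitrary spanning surface of an unlink with no hypothesis on boundary slopes, and automatically accounts for possible $P_\pm$ summands. Second, your two alternative routes (``the bridge trisection \ldots should be recognizable as the bridge trisection of such a split union'' and ``I would make this precise by showing the tri-plane diagram \ldots is equivalent via the allowed moves to one in which each $\DD_i$ is trivial'') are statements of the desired conclusion rather than arguments; the recognition step is exactly what the identification $\Ss=\Ff$ accomplishes, and without it the proposal has a genuine gap.
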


Note that Theorem~\ref{cor:doublestandard} also applies to surfaces with triply-standard shadow diagrams, as a special class of doubly-standard shadow diagrams.

\begin{proof}
Suppose $\Ss$ has a shadow diagram $(A,B,C)$ such that the pairings $(A,B)$ and $(B,C)$ are standard.  Consider the standard Heegaard splitting $\partial X_3= S^3 = H_+ \cup_{\Sigma} H_-$, and let $\Sigma_{\pm}$ be a parallel copy of $\Sigma$ pushed slightly into $H_{\pm}$.  Note that $A \cup B$ may have nested components (so that components of $A \cup B$ don't necessarily bound a collection of disjoint disks).  After a sequence of arc slides, however, performed only on the arcs in $A$, we obtain arcs $A'$ such that the embedded curves $A' \cup B$ bound a pairwise disjoint collection of disks.  We perform a similar procedure with $B \cup C$ to obtain $B \cup C'$. Now, embed parallel copies $A'_+ \cup B_+$ of the curves $A' \cup B$ in $\Sigma^+$ so that they bound a pairwise disjoint collection $D_+$ of disks in $\Sigma_+$, and embed parallel copies $B_- \cup C'_-$ of the curves $B \cup C'$ in $\Sigma_-$ so that they bound a pairwise disjoint collection $D_-$ of disks in $\Sigma_-$.  In $H_+$, there is an isotopy of $B_+$ to $B \subset \Sigma$ taking the disks $D_+$ to disks $D_1 \subset H_+$ such that $D_1 \cap \Sigma = B$.  The tangle $\T_1=\Ss\cap(H_+)$ is the image of $A'_+$ under this isotopy.  Similarly, in $H_-$ there is an isotopy of $B_-$ to $B$ taking the disks $D_+$ to disks $D_2 \subset H_-$ such that $D_2 \cap \Sigma = B$.  The tangle $\T_3=\Ss\cap H_-$ is the image of $C'_-$ under this isotopy. See Figure \ref{fig:doublystandard}.

By construction $D_1 \cap D_2 = B$, so that $F = D_1 \cup D_2$ is a spanning surface for the unlink $\T_1 \cup \T_3$.  Note further that $D_1$ is a trivial disk system for $\T_1 \cup B$, and $D_2$ is a trivial disk system for $B \cup \T_3$; hence, $\Ss$ is the union of $D_1, D_2$, and $D_3$, where $D_3$ is a trivial disk system for $\T_1 \cup \T_3$ pushed into $B^4$.  However, since $F  = D_1 \cup D_2 \subset S^3$, it follows that $\Ss$ is also isotopic to the cap-off $\Ff$ of $F$, which is unknotted by Proposition~\ref{spancap}.
\end{proof}

We are now ready to prove our main result.

\begin{theorem}\label{proc}
	If $\Ss$ is connected and $e(\Ss) = 0$, then there is a procedure to produce a Seifert solid for $\Ss$ that takes as input a tri-plane diagram for $\Ss$.
\end{theorem}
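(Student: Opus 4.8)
The plan is to build the Seifert solid one sector at a time and then glue the pieces. Assume first that $\Ss$ is orientable, so that $e(\Ss)=0$ automatically, and fix an orientation; this orients each tangle $\T_i$ and each planar tangle diagram $\DD_i$. Since $\DD_i\cup\overline{\DD_{i+1}}$ is a diagram for the unlink $U_i=\partial\D_i$ in $\partial X_i\cong S^3$, classical Seifert's algorithm produces an oriented Seifert surface $R_i\subset\partial X_i$ with $\partial R_i = U_i$; being orientable, $R_i$ has every boundary component of slope $0$. I would then apply Proposition~\ref{spancap}(1) inside the $4$--ball $X_i$: the trivial disk system filling $U_i$ is $\D_i$ up to isotopy rel boundary, so the cap-off of $R_i$ is (isotopic to) $R_i\cup\D_i$, and it bounds an embedded handlebody $V_i\subset X_i$ with $V_i\cap\partial X_i = R_i$. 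Thus each sector contributes a $3$--dimensional handlebody $V_i$ with $\partial V_i = R_i\cup_{U_i}\D_i$.

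To assemble these into a Seifert solid I need $V_{i-1}$ and $V_i$ to agree on the bounding $3$--ball $H_i=X_{i-1}\cap X_i$. Since $V_j\cap\partial X_j = R_j$ and $H_i\subset\partial X_i\cap\partial X_{i-1}$, this is equivalent to $R_{i-1}\cap H_i = R_i\cap H_i$ for every $i\in\Z_3$ -- exactly the assertion that the three Seifert surfaces fit together, which is the step I expect to be the main obstacle. The mechanism is the locality of Seifert's algorithm: every crossing of the book diagram $\DD_i\cup\overline{\DD_{i+1}}$ lies in the interior of a single page, so, after choosing consistently how the disks bounded by Seifert circles are nested, the algorithm can be carried out page by page, making $R_i\cap H_i$ a surface $Q_i$ determined by $\DD_i$ alone -- the ``Seifert surface of the tangle $\DD_i$'' -- and $R_i\cap\overline{H_{i+1}}$ its mirror $\overline{Q_{i+1}}$. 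The tri-plane identification of $X_{i-1}$ with $X_i$ along $H_i$ is the orientation-reversing page-turn, and since Seifert's algorithm sends a mirrored tangle diagram to the mirror surface, it matches $R_{i-1}\cap\overline{H_i}=\overline{Q_i}$ with $R_i\cap H_i = Q_i$. The delicate part is to arrange that these restrictions coincide \emph{literally} and not merely up to isotopy, which requires handling the Seifert circles that cross the binding, and the nesting of the disks they bound, coherently across all three sectors, and in particular along the central sphere $\Sigma$ where the three tangle-resolutions must be reconciled. Granting this, $Y=V_1\cup V_2\cup V_3$ is a compact $3$--manifold with $\partial Y = \D_1\cup\D_2\cup\D_3 = \Ss$, built as a union of handlebodies.

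The orientability assumption entered only through the slope--$0$ condition needed to invoke Proposition~\ref{spancap}(1). For a nonorientable surface $\Ss$ with $e(\Ss)=0$ one cannot orient the diagram, but the same construction goes through using nonorientable spanning surfaces for the $U_i$ -- coming, say, from a checkerboard structure on each book diagram -- in place of oriented Seifert surfaces, with the boundary framings chosen so that the sector pieces still fit together; the hypothesis $e(\Ss)=0$ is precisely the obstruction to making these choices coherently. Read through Proposition~\ref{spancap}, this construction simultaneously yields alternative proofs of the theorems of Gluck and Gordon--Litherland on the existence of Seifert solids.
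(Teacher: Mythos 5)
Your overall architecture---build a spanning surface for each unlink $U_i=\partial\D_i$ in $\partial X_i$, cap it off to a handlebody $V_i\subset X_i$ via Proposition~\ref{spancap}, and glue---is the paper's, but the step you yourself flag as ``the main obstacle'' is a genuine gap, and the locality argument you sketch to close it does not work. If you run Seifert's algorithm on the full book diagram $\DD_i\cup\overline{\DD_{i+1}}$, the restriction $R_i\cap H_i$ is \emph{not} determined by $\DD_i$ alone: the Seifert arcs of the smoothed tangle $\DD_i$ are closed up into Seifert circles using arcs coming from the other page, so which arcs lie on a common circle---hence which arcs bound a common disk, how those disks are nested, and at what heights they sit---depends on $\DD_{i+1}$. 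Two arcs of smoothed $\DD_i$ can lie on one Seifert circle of $\DD_i\cup\overline{\DD_{i+1}}$ but on two different circles of $\DD_{i-1}\cup\overline{\DD_i}$, in which case $R_i\cap H_i$ and $R_{i-1}\cap H_i$ need not even be homeomorphic, let alone coincide. The paper's missing idea (Proposition~\ref{orspan}) is to first perform mutual braid transpositions so that the bridge points alternate in sign along the binding, choose $b$ disjoint arcs $\eps$ in the binding joining adjacent bridge points of opposite sign, and apply Seifert's algorithm to the closed oriented diagram $\DD_i\cup\eps$, which lives entirely in its own page. The resulting $F_i\subset H_i$ is determined by $\DD_i$ alone, the spanning surface $\widehat F_i=F_i\cup\overline F_{i+1}$ (glued along $\eps$) satisfies the hypotheses of Proposition~\ref{spancap}, and the matching $\widehat F_i\cap H_i=\widehat F_{i-1}\cap H_i=F_i$ is automatic. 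Note that the object you feed into Proposition~\ref{spancap} need not be a Seifert's-algorithm surface for $\DD_i\cup\overline{\DD_{i+1}}$; any slope-zero spanning surface will do, which is what makes this detour legitimate.

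In the nonorientable case your sketch correctly identifies $e(\Ss)=0$ as the relevant hypothesis, but ``with the boundary framings chosen so that the sector pieces still fit together'' is an assertion, not an argument, and it is the entire content of Proposition~\ref{nonorspan}. The paper takes compatible checkerboard-type spanning surfaces $\widehat F_i$, observes via Proposition~\ref{spancap}(2) that the boundary slopes sum to $e(\Ss)=0$, and then minimizes $\sum_J|\iota_{\widehat F}(J)|$ over all choices: if the minimum were positive there would be boundary curves $J_+$ and $J_-$ with slopes of opposite sign, and connecting them by a path $\gamma\subset\Ss$ and boundary-summing a trivial M\"obius band into the relevant $F_i$ at each point of $\gamma\cap\tau$ shifts slopes by $\pm2$ along the path while preserving compatibility, strictly decreasing the total---a contradiction. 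You need this (or an equivalent) mechanism to get every component to slope zero simultaneously before Proposition~\ref{spancap}(1) can be invoked in each sector.
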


\begin{proof}
	The proof follows from the proofs of Propositions~\ref{orspan} and~\ref{nonorspan} below.
\end{proof}

In Section~\ref{subsec:getheegaard}, we show that there is a procedure to produce a Heegaard splitting for the Seifert solid when $\Ss$ is a 2--knot.

In addition to providing the proof of the above theorem, the next two propositions provide alternate proofs of the results in~\cite{Gluck62} and~\cite{gordonlitherland} mentioned above.

\begin{figure}{\centering
\includegraphics[width=.9\textwidth]{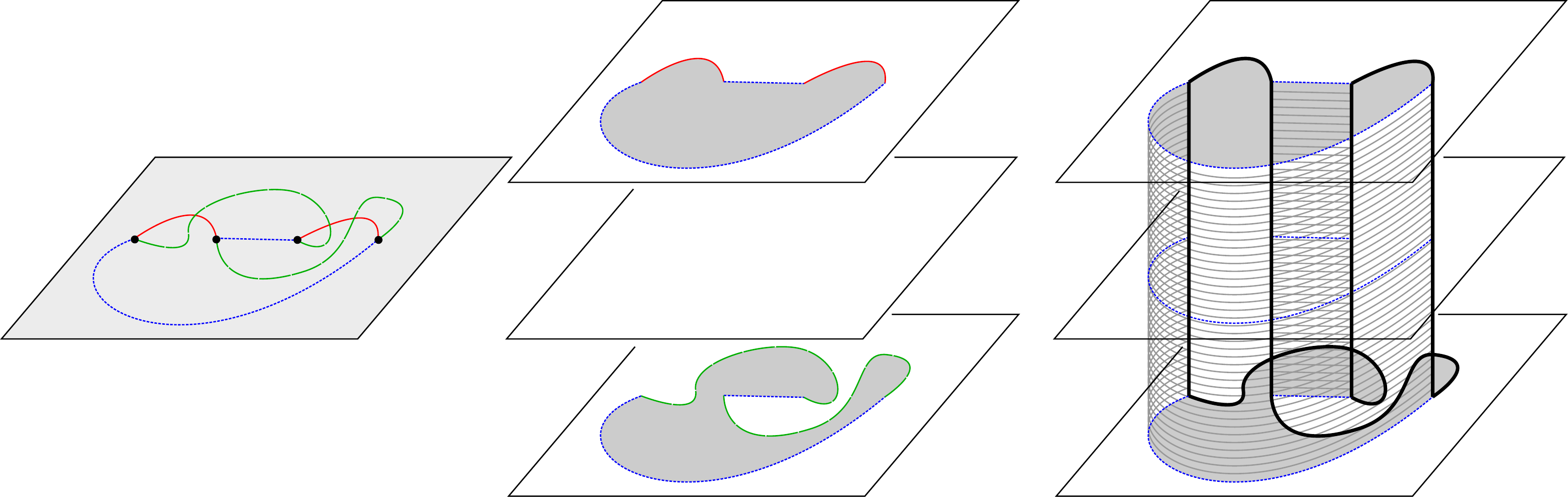}
\caption{Left: a doubly standard shadow diagram $(A,B,C)$. The pairings $(A,B)$ and $(B,C)$ are standard. Middle: disks in $\Sigma_+$ and $\Sigma_-$ bounded by parallel copies of $A\cup B$ and $B\cup C$, respectively. Right: A spanning surface $F$ for $\T_1\cup\T_2$ in $\partial X_3=S^3$.}
\label{fig:doublystandard}
}
\end{figure}

\begin{proposition}\label{orspan}
Every orientable surface-link $\Ss$ bounds a Seifert solid in $S^4$.
\end{proposition}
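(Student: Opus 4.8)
The plan is to leverage the explicit construction underlying Theorem \ref{proc}: starting from a tri-plane diagram $\DD = (\DD_1,\DD_2,\DD_3)$ for an orientable surface-link $\Ss$, fix a coherent orientation of $\Ss$, which induces orientations on each tangle $\T_i$ and hence on each unlink diagram $\DD_i \cup \overline{\DD_{i+1}}$. First I would apply the classical Seifert algorithm to each oriented unlink diagram $\DD_i \cup \overline{\DD_{i+1}}$, producing an oriented spanning surface $G_i \subset \pd X_i = S^3$ with $\pd G_i = \T_i \cup \overline{\T_{i+1}}$; because these are \emph{unlink} diagrams, each component of $\pd G_i$ is an unknot, and I would argue (via Proposition \ref{spancap}(\ref{signitem}), since $e(\Ss)=0$ forces the Seifert-algorithm surfaces to have total boundary slope zero) that each $G_i$ can be taken with all boundary slopes $0$. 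The key compatibility point is that the three classical Seifert surfaces are built from the \emph{same} oriented tangles along the sphere $\Sigma$, so after a collar isotopy the portions of $G_{i-1}$ and $G_i$ lying near $H_i$ agree, and the $G_i$ glue along $\bigcup_i H_i$ into a surface $\Ss'$ isotopic to $\Ss$ with a well-defined Seifert-type framing.

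Next, inside each 4--ball $X_i$, I would cap off: the disk system $\D_i$ is trivial (boundary-parallel), so $\pd \D_i = \T_i \cup \overline{\T_{i+1}}$ bounds not only the disks $\D_i$ but, pushing the Seifert pieces inward, I want to fill the region between $\partial X_i$-spanning data and $\Ss$ by a 3--manifold. Concretely, I would invoke Proposition \ref{spancap}(1): since the relevant spanning surface in each $\pd X_i$ has all boundary slopes zero, its cap-off bounds a (possibly nonorientable, here orientable) handlebody $V_i \subset X_i$ with $V_i \cap \pd X_i$ equal to that spanning surface. Taking the union $Y = V_1 \cup V_2 \cup V_3$ along the matching pieces in the $H_i$'s gives a compact 3--manifold. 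The final step is to check that $\pd Y = \Ss$ (up to isotopy) and that $Y$ is embedded: embeddedness follows because the $V_i$ live in disjoint interiors $\Int(X_i)$ and are glued only along their common boundary pieces in $\bigcup_i H_i$, and $\pd Y = \Ss$ because the $\pd X_i$-spanning surfaces were arranged to agree with the Seifert-algorithm output, which recovers $\Ss$.

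The main obstacle I anticipate is the \emph{coherent matching along $\Sigma$ and the $H_i$}: a priori the classical Seifert algorithm applied to $\DD_i \cup \overline{\DD_{i+1}}$ and to $\DD_{i-1} \cup \overline{\DD_i}$ could produce surfaces whose germs near $\T_i$ disagree (different Seifert circles, nesting, or framing near the bridge points). Handling this carefully — showing that, up to isotopy rel the bridge sphere and an adjustment of boundary slopes using the $e(\Ss)=0$ hypothesis, the three pieces can be made to glue to an \emph{embedded} 3--manifold rather than merely an immersed one — is the technical heart; I expect this is precisely what the detailed construction in the body of Section \ref{sec:seifert} accomplishes, and the orientable case is the cleaner one since no half-twisted bands $F_{\pm 1}$ appear, so every local model is a plumbing of annuli/bands that extends straightforwardly over the 4--dimensional pieces. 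Once the gluing is verified, the conclusion that $Y$ is a Seifert solid for $\Ss$ is immediate, and as a byproduct this reproves Gluck's theorem.
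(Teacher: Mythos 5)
Your overall architecture is the same as the paper's: run Seifert's algorithm sector by sector, use Proposition~\ref{spancap}(1) to cap each resulting spanning surface off with a handlebody $V_i\subset X_i$, and take $Y=V_1\cup V_2\cup V_3$. However, the step you explicitly defer --- making the three Seifert surfaces agree along the half-spaces $H_i$ --- is the entire content of the proof, and without it the argument is incomplete. The paper's resolution is not buried elsewhere in Section~\ref{sec:seifert}; it is a short trick inside this very proof: perform mutual braid transpositions so that the bridge points alternate in sign along the equator, choose once and for all a collection $\eps$ of $b$ disjoint arcs in the equator joining bridge points of opposite sign, and apply Seifert's algorithm to each \emph{tangle} diagram $\DD_i\cup\eps$ (not to the unlink diagram $\DD_i\cup\overline{\DD_{i+1}}$), obtaining a surface $F_i$ supported near $H_i$. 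Setting $\widehat F_i=F_i\cup\overline{F_{i+1}}$, glued along the common arcs $\eps$, one gets a spanning surface for $\partial\D_i$ with $\widehat F_{i-1}\cap H_i=F_i=\widehat F_i\cap H_i$ \emph{by construction}, so the compatibility problem you flag never arises. Applying Seifert's algorithm globally to each $\DD_i\cup\overline{\DD_{i+1}}$, as you propose, genuinely can produce different Seifert circles near $\T_i$ in the two sectors adjacent to $H_i$, and you give no mechanism to repair this.

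A second, smaller problem is your slope argument. You invoke $e(\Ss)=0$ to conclude the \emph{total} boundary slope is zero and then assert all slopes ``can be taken'' zero. Total slope zero does not imply componentwise slope zero; upgrading the former to the latter is exactly the extra work (the minimality argument with M\"obius-band modifications) done in Proposition~\ref{nonorspan} for the nonorientable case. In the orientable case you need none of this: an orientable spanning surface induces the $0$--framing (the Seifert framing) on each unknotted boundary component, so every slope is zero automatically and Proposition~\ref{spancap}(1) applies directly. As written, your proposal imports the wrong hypothesis and the wrong inference for the case at hand.
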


\begin{proof}
Let $\DD$ be a tri-plane diagram for $\Ss$, with induced orientation on the bridge points $\mathbf{x}$.  Perform mutual braid transpositions so that the bridge points alternate sign (orientation).  Then there are $b$ pairwise disjoint arcs $\eps$ contained in the equator $e$ connecting bridge points of opposite signs, so that $\DD_i \cup \eps$ is an oriented link diagram.  Let $F_i$ be the Seifert surface obtained by performing Seifert's procedure on the diagram $\DD_i \cup \eps$, and let $\widehat F_i = F_i \cup \overline F_{i+1}$ be the spanning surface obtained by gluing $F_i$ to $\overline F_{i+1}$ along $\eps$.  By Proposition~\ref{spancap}, there exists a handlebody $V_i \subset X_i$ such that $\pd V_i = \widehat F_i \cap \D_i$ and $V_i \cap \pd X_i = \widehat F_i$.  Finally, $Y = V_1 \cup V_2 \cup V_3$ is an embedded 3--manifold whose boundary is $\D_1 \cup \D_2 \cup D_3 = S$, and so $Y$ is a Seifert solid for $\Ss$.
\end{proof}

\begin{proposition}\label{nonorspan}
If $\Ss$ is connected and $e(\Ss) = 0$, then $\Ss$ bounds a spanning solid in $S^4$.
\end{proposition}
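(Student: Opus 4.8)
The plan is to run the argument of Proposition~\ref{orspan} sector by sector, but since $\Ss$ need not be orientable we cannot appeal to Seifert's algorithm to obtain, for free, spanning surfaces of boundary slope zero in the three sectors; instead I would build spanning surfaces from an arbitrary smoothing and then spend the hypothesis $e(\Ss)=0$ to correct all of their boundary slopes to $0$, so that Proposition~\ref{spancap}(1) still applies. Concretely: fix a tri-plane diagram $\DD=(\DD_1,\DD_2,\DD_3)$ for $\Ss$ and, after mutual braid transpositions, choose $b$ pairwise disjoint arcs $\eps$ in the equator, common to all three sectors, so that each $\DD_i\cup\eps$ is a link diagram (now merely unoriented) for the closed-up tangle $\T_i\cup\eps\subset H_i$. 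For each $i$, choose a smoothing at every crossing of $\DD_i$, cap the resulting circles by disjoint disks pushed slightly into $\Int(H_i)$, and reconnect them by a half-twisted band at each crossing and by a band along each arc of $\eps$, taking the band along each arc of $\eps$ to be the same in all three sectors (a choice local to the equator). This produces spanning surfaces $F_i\subset H_i$ for $\T_i\cup\eps$ that agree along $\Sigma$, and hence, for each $i$, a possibly nonorientable, possibly disconnected spanning surface $\widehat F_i=F_i\cup\overline F_{i+1}$ for the unlink $\widehat\T_i=\partial\D_i=\T_i\cup\overline\T_{i+1}$ in $\partial X_i=S^3$.

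The core of the argument is correcting the boundary slopes of the $\widehat F_i$. Each boundary component of $\widehat F_i$ is an unknot, so its slope is an even integer (e.g.\ by Lemma~\ref{compress}); organize these slopes on the graph $G$ whose vertices are the components of $\widehat\T_1\sqcup\widehat\T_2\sqcup\widehat\T_3$ (equivalently, the disks of $\D_1$, $\D_2$, and $\D_3$) and whose edges are the arcs of $\T_1\sqcup\T_2\sqcup\T_3$, an arc of $\T_i$ joining the vertex of $\widehat\T_{i-1}$ and the vertex of $\widehat\T_i$ on which it lies. Since $\Ss=\D_1\cup\D_2\cup\D_3$ is connected, $G$ is connected. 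For an arc $a$ of $\T_i$, boundary-connected-summing $F_i$ near $a$ with a small copy of $F_+$ or $F_-$ produces another spanning surface for $\T_i\cup\eps$ and changes, by $+2$ or $-2$, the slope that $\widehat F_i$ induces on the component of $\widehat\T_i$ through $a$; since $\overline F_i\subset\widehat F_{i-1}$, it changes the slope that $\widehat F_{i-1}$ induces on the component of $\widehat\T_{i-1}$ through $a$ by the opposite amount, and changes no other slope. These are precisely the chip-firing moves along the edges of $G$, and they preserve the sum $T$ of all of the slopes. By Proposition~\ref{spancap}(2), $T$ equals the sum over $i$ of the normal Euler numbers of the cap-offs of the $\widehat F_i$; a normal Euler number computation identifies this sum with $e(\Ss)$, so $T=0$. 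Since $G$ is connected, all slopes are even, and $T=0$, finitely many such moves arrange that every boundary component of every $\widehat F_i$ has slope $0$, while keeping the $\widehat F_i$ agreeing along $\Sigma$ (the moves are supported away from $\Sigma$).

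With all boundary slopes zero, I would finish exactly as in Proposition~\ref{orspan}: by Proposition~\ref{spancap}(1) there is, for each $i$, a (possibly nonorientable) handlebody $V_i\subset X_i$ with $V_i\cap\partial X_i=\widehat F_i$ and $\partial V_i=\widehat F_i\cup\D_i$; the pieces $V_{i-1}$ and $V_i$ meet precisely along the common surface $F_i\subset H_i$; and hence $Y=V_1\cup V_2\cup V_3$ is an embedded $3$--manifold with $\partial Y=\D_1\cup\D_2\cup\D_3=\Ss$, i.e.\ a spanning solid for $\Ss$. Carrying this out for all surface-links with $e(\Ss)=0$ reproves the Gordon--Litherland theorem.

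I expect the main obstacle to be the normal Euler number bookkeeping identifying the total slope $T$ with $e(\Ss)$: this is the single place the hypothesis $e(\Ss)=0$ enters, and it is exactly what makes the slope correction possible. A secondary point requiring care is arranging the smoothings, the $\eps$-bands, and the slope-correcting moves so that the three surfaces $F_i$ continue to agree near $\eps$ throughout, so that the handlebodies $V_i$ glue without creating extra intersections.
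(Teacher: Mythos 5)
Your proposal is correct and follows essentially the same route as the paper: build compatible spanning surfaces in the three sectors, use $e(\Ss)=0$ to see that the boundary slopes sum to zero, use connectivity of $\Ss$ together with local boundary-connect-sums with trivial M\"obius bands along arcs of $\tau$ to transfer slope between adjacent boundary curves until all slopes vanish, and then apply Proposition~\ref{spancap}(1) sectorwise. The only cosmetic differences are that the paper starts from checkerboard surfaces rather than arbitrary smoothings and phrases the slope correction as a minimality-plus-path argument in $\Ss$ rather than as chip-firing on the dual graph; these are the same move organized differently.
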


\begin{proof}
Consider a bridge trisection $\TT$ of $\Ss$, with $U_i = \pd \D_i$ and $\tau = \T_1 \cup \T_2 \cup \T_3$.  By taking, for example, a tri-plane diagram $\DD$ and compatible checkerboard surfaces in $\DD_i$, we can produce spanning surfaces $\widehat F_i$ for $U_i$ such that $\widehat F_i \cap H_i = \widehat F_{i-1} \cap H_i$.  Let $F_i$ denote $\widehat F_i \cap H_i$.  For each component $J$ of $U_i = \pd \widehat F_i$, let $\iota_{\widehat F}(J)$ denote the induced boundary slope on the curve $J$ by the surface $\widehat F_i$.  Then by Proposition~\ref{spancap}, we have
$$\sum_{J \subset U_1\cup U_2\cup U_3} \iota_{\widehat F}(J) = 0.$$

Choose a triple of spanning surfaces $\widehat F_i$ such that $\sum |\iota_{\widehat F}(J)|$ is minimal over all possible choices.  We claim that $\sum |\iota_{\widehat F}(J)| = 0$.  If not, then there exist boundary curves $J_+$ and $J_-$ such that $\iota_F(J_+) > 0$ and $\iota_F(J_-) < 0$.  Noting that the surface $\Ss$ contains all curves $J \subset U_i \subset \tau$, push each curve $J\subset U_i$ slightly off of $\tau$ into the corresponding disk component of $\D_i$, so that the collection of curves $J$ is embedded in $\Ss$ and disjoint from $\tau$.  Choose a path $\gamma \subset \Ss$ from $J_+$ to $J_-$, avoiding the bridge points, noting that $|\gamma \cap \tau| > 0$.  At each point of $\gamma \cap \tau$, modify the the corresponding component of $F_i$ by taking the boundary connected sum of $F_i$ with a trivial M\"obius band to obtain new surfaces $\widehat F_i'$ and $F_i'$, so that the corresponding boundary curves satisfy $\iota_{F'}(J'_+) = \iota_F(J_+) - 2$, $\iota_{F'}(J'_-) = \iota_F(J_-) + 2$, and $\iota_{F'}(J') = \iota_{F}(J)$ for all other curves $J'$.  It follows that $\sum |\iota_{F'}(J')| < \sum|\iota_F(J)|$, contradicting our assumption of minimality. (Note that $\iota_F(J)$ is always even, since it represents the number of intersection points between the boundary curves of spanning surfaces; see the proof of Lemma~\ref{compress}.)

We conclude that $\iota_F(J) = 0$ for all curves $J$, and thus by Proposition~\ref{spancap}, each spanning surface $\widehat F_i$ cobounds a (possibly) nonorientable handlebody $V_i \subset X_i$ with the disks $\D_i$.  It follows that $V_1 \cup V_2 \cup V_3$ is a spanning solid for $\Ss$ in $S^4$.
\end{proof}

%%%%%%%%%%%%%%%%%%%%%%%%%%%%%%%%%%%%%%%%%%%%%%%%%%%%%%%%%%%%%%%%%%%
\subsection{Procedure to find a Heegaard diagram for a Seifert solid}\label{subsec:getheegaard}
%%%%%%%%%%%%%%%%%%%%%%%%%%%%%%%%%%%%%%%%%%%%%%%%%%%%%%%%%%%%%%%%%%%

In this subsection, we describe a procedure for finding a Heegaard diagram for the Seifert solid coming from a bridge trisection $\TT$ of a 2--knot $\Ss$.  We use labels consistent with those appearing above in the proof of Proposition~\ref{orspan}. The process is illustrated in Figures~\ref{ex1a} through~\ref{ex2b}.

\textbf{Step 1}:  Given a tri-plane diagram $\DD$ for $\Ss$ perform interior Reidemeister moves and mutual braid transpositions so that the induced Seifert surfaces satisfy the following conditions:
\be
\item[(a)] Each of $F_{1}$, $F_{2}$, and $\widehat F_1$ is a collection of disks.
\item[(b)] Surfaces $\widehat F_2$ and $\widehat F_3$ are connected.
\item[(c)] $g(\widehat F_2) = g(F_{3})$.
\ee
See Figure \ref{ex1a}. Note that attaining condition (a) is possible since any tri-plane diagram can be converted to one in which two of the tangles have no crossings.  Condition (b) can be attained by performing interior Reidemeister moves on the diagram $\DD_{3}$.  Attaining condition (c) is possible since we can arrange so that $F_{2}$ is a collection of $b$ bridge disks, in which case $\widehat F_2$ deformation retracts onto $F_{3}$ (although in general, we need not assume that $F_{2}$ has $b$ components, as shown below).

\begin{figure}
  \centering
\labellist
\pinlabel {$F_{1}$} at 130 -50
\pinlabel {$F_{2}$} at 520 -50
\pinlabel {$F_{3}$} at 950 -50
\endlabellist
  \includegraphics[width=.5\linewidth]{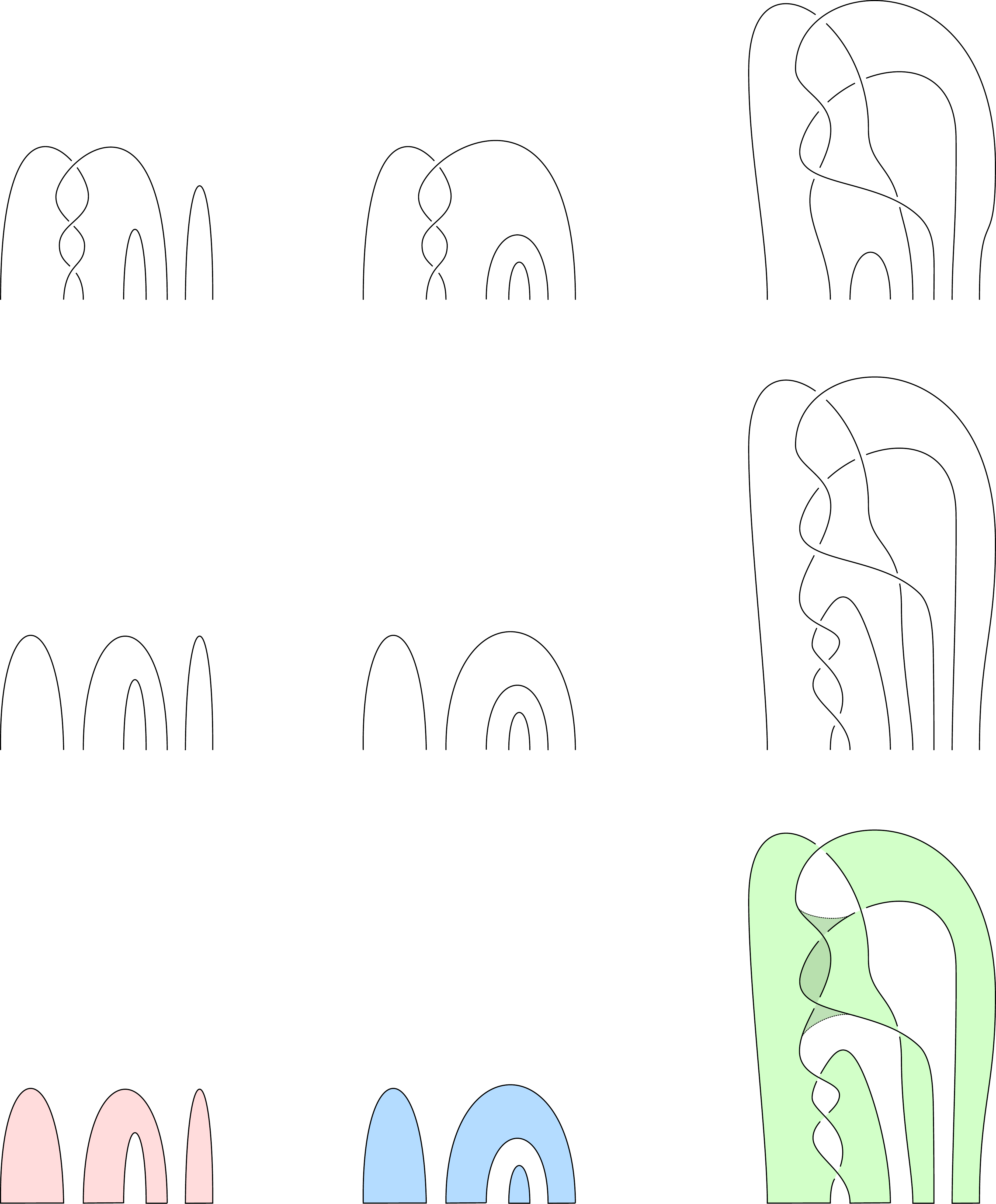}
\vspace{.1in}
  \caption{To perform the Seifert solid procedure on a tri-plane diagram, we first perform mutual braid transposition until the tangle diagrams in $V_{1}$ and $V_{2}$ have no crossings. Then we perform the usual Seifert's procedure for knot diagrams to obtain surfaces $F_{1}$, $F_{2}$, $F_{3}$ that agree in the bridge sphere $\Sigma$, with $F_{1}$ and $F_{2}$ and $\widehat F_1$ all collections of disks and $g(\widehat F_2)=g(F_{3})$.}
\label{ex1a}
\end{figure}

\textbf{Step 2}:  Following the proof of Proposition~\ref{spancap}, the surfaces $\widehat F_2$ and $\widehat F_3$ compress completely to disks in $S^3$.  Let $\A$ be a complete collection of pairwise disjoint compressing curves in $\widehat F_3$, and let $\n$ be a complete collection of pairwise disjoint compressing curves in $\widehat F_2$. See Figure~\ref{ex1b} (top row).

\textbf{Step 3}:  If necessary, slide the curves $\n$ over the components of $\pd \D_2$ to obtain a collection of curves $\n' \subset F_{3}$.  Note that since $g(F_{3}) = g(\widehat F_2)$, as curves in $\Ff_2 = \widehat F_2 \cup \D_2$, the collection $\n$ can be isotoped to be contained in $F_{3}$, and any isotopy of a curve over a disk component of $\D_2$ can be realized as a slide over $\pd \D_2$.  Thus, such a sequence of slides exists. See Figure \ref{ex1b} (middle row).

\textbf{Step 4}:  Let $P = \D_1 \cup \D_2$, so that $P$ is a planar surface with $c_3$ boundary components, let $Q$ be the surface obtained by gluing $P$ to $\widehat F_3$ along their boundaries, and let $\A^*$ be a choice of $c_3 - 1$ boundary components of $P$ and some minimal number of curves in $\A$ so that $\A^*$ forms a cut system for $Q$.

\textbf{Step 5}:  Let $\n^*$ be the union of $\n'$ and a collection of curves in $Q$ obtained by the following instructions:  For each component of $J$ of $\pd D_1$, suppose that $J$ meets $d$ disk components of $F_{2}$.  Choose $d-1$ of these components, isotope them off of $F_{2}$ in $\Ff_2 = F_{2} \cup F_{3} \cup \D_2$, and add these $d-1$ curves to $\n^*$.  Discard any superfluous curves of $\n'$ so that $\n^*$ is a cut system for $Q$.

\begin{figure}
  \centering
\labellist
\pinlabel {$\alpha$} at -15 550
\pinlabel {$\beta$} at 340 550
\pinlabel {$\alpha'$} at -15 320
\pinlabel {$\beta'$} at 340 320
\pinlabel {$\cong$} at 160 90
\endlabellist
  \includegraphics[width=.5\linewidth]{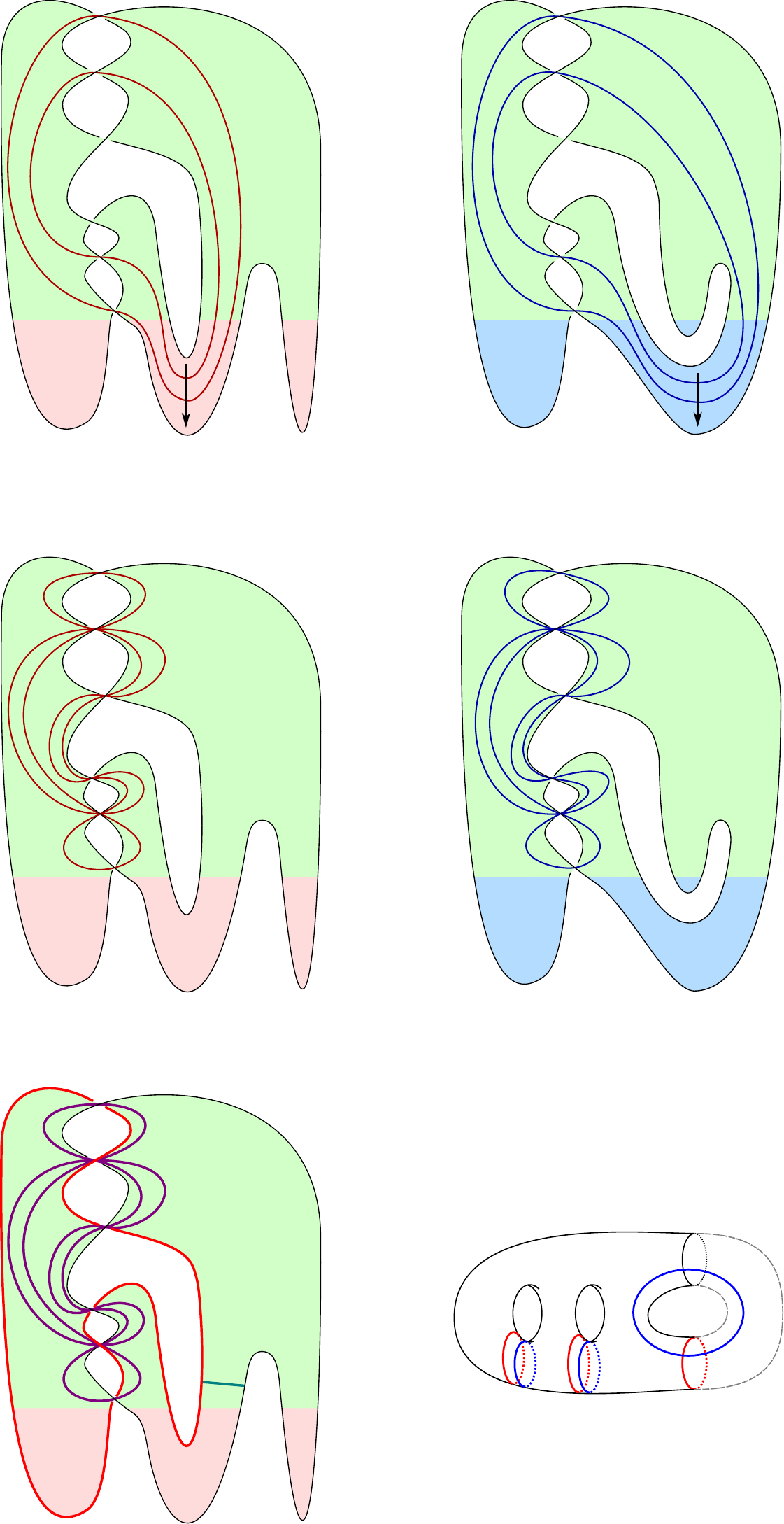}
  \caption{Top: we find complete sets of compressing curves $\alpha, \beta$ for $\widehat F_3$ and $\widehat F_2$, respectively. Middle: We slide $\alpha,\beta$ (with slides indicated in top row) over $\partial \widehat F_3,\partial \widehat F_2$ to obtain curve systems $\alpha',\beta'$ that are each completely within $F_{3}$. Bottom: We obtain $\alpha^*$ (red and purple curves) by adding boundary curves as in Step (5) of \S\ref{subsec:getheegaard}. We obtain $\beta^*$ by adding arcs as in Step (4). Then $(Q;\alpha^*,\beta^*)$ is a Heegaard diagram for a (closure of a) Seifert solid for the 2--knot described by the initial tri-plane diagram. }
\label{ex1b}
\end{figure}

\begin{proposition}
Using the procedure described above, $\Ss$ bounds a punctured copy of the 3--manifold determined by the Heegaard diagram $(Q;\A^*,\n^*)$.  
\end{proposition}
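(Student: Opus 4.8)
The plan is to realize the closed $3$--manifold $\widehat Y := Y \cup_\Ss B$, obtained by capping off $\Ss = \pd Y$ with a $3$--ball $B$, as a union of two handlebodies glued along $Q$, and then to identify their meridian systems with $\A^*$ and $\n^*$; since $\Ss \cong S^2$, this is precisely the statement that $\Ss = \pd(\widehat Y \setminus \Int B)$ bounds the punctured copy $\widehat Y \setminus \Int B$ of the $3$--manifold determined by $(Q;\A^*,\n^*)$. Throughout I use the Seifert solid $Y = V_1 \cup V_2 \cup V_3$ built in the proof of Proposition~\ref{orspan}, so each $V_i \subset X_i$ is a handlebody with $V_i \cap \pd X_i = \widehat F_i$ and $\pd V_i = \widehat F_i \cup \D_i$, and $V_i \cap V_{i+1} = F_{i+1}$. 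Condition~(a) of Step~1 makes $F_{1}$ and $F_{2}$ collections of disks and $V_1$ a disjoint union of $c_1$ balls, while conditions~(b) and~(c) make $V_2$ and $V_3$ connected handlebodies of genera $g(\widehat F_2) = g(F_{3})$ and $g(\widehat F_3)$.

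First I would set $W_- = V_3 \cup_{\D_3} B$ and $W_+ = V_1 \cup_{F_{2}} V_2$. Since $V_3$ meets $V_1 \cup V_2$ along $\widehat F_3 = F_{3} \cup \overline F_1$, and $B$ meets $Y$ along $\Ss = \D_1 \cup \D_2 \cup \D_3$, one obtains $\widehat Y = W_- \cup W_+$, and a direct computation of boundaries gives $\pd W_- = \widehat F_3 \cup \D_1 \cup \D_2 = Q = \pd W_+$ (with $P = \D_1 \cup \D_2$ as in Step~4). I would then check that $W_\pm$ are handlebodies. Gluing the ball $B$ to the genus-$g(\widehat F_3)$ handlebody $V_3$ along the $c_3$ disks $\D_3$ attaches $c_3 - 1$ one-handles, so $W_-$ is a handlebody of genus $g(\widehat F_3) + c_3 - 1$. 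Gluing the balls $V_1$ to the handlebody $V_2$ along the disks $F_{2}$ likewise only attaches one-handles, so $W_+$ is a handlebody once it is seen to be connected; this holds because every ball of $V_1$ meets $F_{2}$, which in turn follows from the fact that each Seifert circle of $\DD_1 \cup \eps$ traverses an arc of $\eps$ (as $\DD_1$ is a tangle) and that the arcs $\eps$ glue $F_{1}$ to $F_{2}$. Hence $\widehat Y = W_- \cup_Q W_+$ is a Heegaard splitting of genus $g(\widehat F_3) + c_3 - 1 = g(Q)$.

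Next I would identify the two meridian systems. For $W_-$, a complete meridian system for $V_3$ is obtained by pushing slightly into $V_3$ the genus-reducing curves among the compressing curves $\A$ of $\widehat F_3$ --- the boundary-parallel curves among $\A$ become inessential in $\pd V_3 = \Ff_3$, which is why only a subcollection of $\A$ appears --- while the $c_3 - 1$ new one-handles contributed by $B$ have meridians parallel to $c_3 - 1$ of the boundary circles of $P$; together these constitute the cut system $\A^*$ of Step~4, of cardinality $g(Q)$. Symmetrically, for $W_+$, a complete meridian system for $V_2$ comes from the genus-reducing curves among $\n$, which after the slides of Step~3 lie in $F_{3} \subset Q$ as a subcollection of $\n'$; and for each component $J$ of $\pd \D_1$ meeting $d$ disks of $F_{2}$, the corresponding ball of $V_1$ is glued to $V_2$ along those $d$ disks and contributes $d - 1$ one-handles whose meridians are precisely the $d - 1$ curves adjoined in Step~5. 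Discarding the superfluous curves of $\n'$ leaves the cut system $\n^*$. Therefore $(Q;\A^*,\n^*)$ determines $\widehat Y$, and $\Ss$ bounds the punctured copy $\widehat Y \setminus \Int B = Y$, as claimed.

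The step I expect to be the main obstacle is making the last paragraph precise, particularly on the $\n^*$ side. On the $\A^*$ side the feet of the new one-handles of $W_-$ are the visible disks $\D_3$, so their meridians are manifestly boundary circles of $P$; but on the $W_+$ side the feet are the disks $F_{2}$, which are buried inside $\widehat F_1$ and $\widehat F_2$, so one must carefully track how a curve encircling $d - 1$ of those disks appears on $Q$ after the slides and isotopies of Steps~3 and~5, and verify that the surviving curves of $\n'$ together with the Step~5 curves indeed form a cut system of the correct size $g(Q)$. A more routine secondary point is confirming the handlebody structure of $W_+$ --- its connectedness and the bookkeeping behind its genus.
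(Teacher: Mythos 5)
Your proof is correct and follows essentially the same route as the paper's: the same decomposition of the capped-off Seifert solid into $V_3\cup B$ and $V_1\cup V_2$, the same identification of both boundaries with $Q$, and the same realization of $\A^*$ and $\n^*$ as cut systems (all but one curve of $\pd\D_3$ plus part of $\A$ on one side, the Step~5 curves around the disks of $F_2$ plus part of $\n'$ on the other). Your extra remarks on the connectedness of $V_1\cup V_2$ and the genus count are fine and only make explicit what the paper leaves implicit.
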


\begin{figure}
  \centering
  \centering
\labellist
\pinlabel {$\widehat F_3$} at 85 135
\pinlabel {$\widehat F_2$} at 195 135
\endlabellist
  \includegraphics[width=3in]{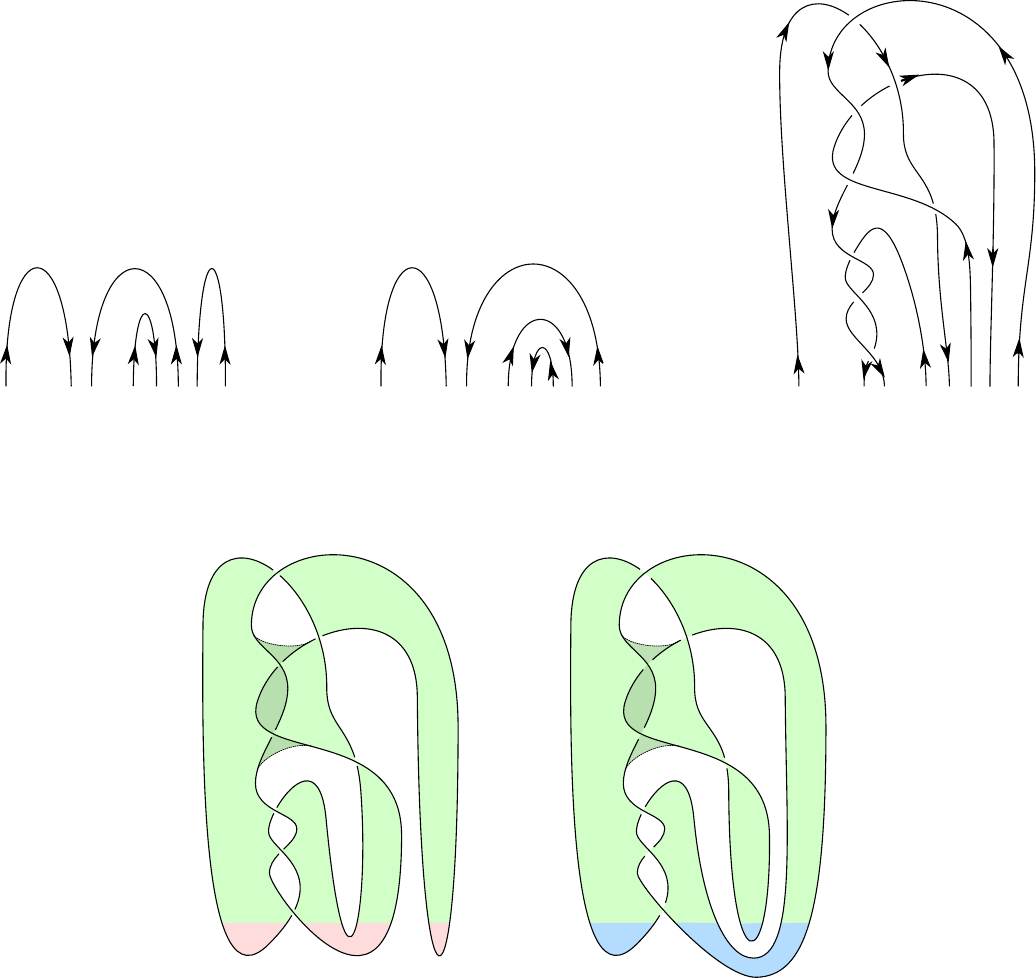}
  \caption{We start performing the Seifert solid procedure (\S\ref{subsec:getheegaard}) on the tri-plane diagram in the top row.}
\label{ex2a}
\end{figure}

\begin{proof}
Suppose that $\DD$ is a tri-plane diagram satisfying conditions (a), (b), and (c) given in Step 1 above.  Following the proofs of Proposition~\ref{spancap} and Proposition~\ref{orspan}, we have that for each $i$, the surface $\widehat F_i \cup \D_i$ bounds a handlebody $V_i$, where $V_1$ is a collection of 3--balls, say $B_1,\dots,B_n$, and $V_2$ and $V_3$ are connected.  Moreover, $\A$ contains a cut system for $V_3$ and $\n$ contains a cut system for $V_2$.  Since $\n'$ is homotopic to $\n$ in $\pd V_2$, it follows that $\n'$ also contains a cut system for $V_2$.  Thus, the Seifert solid bounded by $\Ss$ is equal to $V_2 \cup V_3 \cup B_1 \cup \dots \cup B_n$.  Let $Y$ be the closed 3--manifold obtained by capping off the boundary $\Ss$ of this Seifert solid with an abstract 3--ball $B_0$.  We will show that $(Q;\A^*,\n^*)$ is a Heegaard diagram for $Y$.

To this end, consider $W = V_3 \cup B_0$ and $W' = V_2 \cup B_1 \cup \dots \cup B_n$.  Considering that $\pd V_2 = F_{2} \cup F_{3} \cup \D_2$ and $\pd (B_1 \cup \dots \cup B_n) = F_{1} \cup F_{2} \cup \D_3$, we have that
\[ \pd W' = F_{3} \cup F_{1} \cup \D_2 \cup \D_3 = \widehat F_3 \cup P = Q.\]
Additionally, the 3--balls $B_i$ are attached to $V_2$ along $F_{2}$, which is a collection of disks by condition (a).  It follows that the curves $\n' \cup \pd F_{2}$ bound compressing disks in $W'$ cutting $W'$ into a collection of 3--balls, so $W'$ is a handlebody.  In addition, choosing all but one curve of $\pd F_{2}$ for each component $B_i$ and a subset of $\n'$ as in Step 5 above yields a cut system $\n^*$ for $W'$.

Turning our attention to $W$, we have $\pd V_3 = \widehat F_3 \cup \D_3$ and $\pd B_0 = \D_1 \cup \D_2 \cup \D_3$, so that $\pd W = \widehat F_3 \cup \D_1 \cup \D_2 = Q$, and in addition, the curves $\A$ and $\pd \D_3$ bound disks cutting $W$ into 3--balls.  Choosing $\A^*$ to contain all but one curve of $\pd \D_3$ and a subset of $\A$ as in Step 4, we have that the curves in $\A^*$ bound disks cutting $W$ into a single 3--ball, so $\A^*$ is a cut system for $W$.  We conclude that $(Q;\A^*,\n^*)$ is a Heegaard diagram for $Y$, as desired.
\end{proof}

\begin{remark}
It may be the case that the surface $F_{3}$ compresses in $H_{3}$, in which case $\A$ and $\n$ could have one or more curves in $F_{3}$ in common.  Following the procedure with such $\A$ and $\n$ produces one or more extra $S^1 \X S^2$ summands for the 3--manifold $Y$, and a simpler Seifert solid can be obtained by first compressing $F_{3}$ maximally in $H_{3}$.
\end{remark}

\begin{remark}
The procedure above can be generalized:  We can relax conditions (a), (b), and (c) from Step 1; the only assumption necessary to ensure that $V_1 \cup V_2$ is a handlebody is that their intersection $F_{2}$ is a collection of disks.  However, the weaker conditions make it somewhat more difficult to draw the diagram, since we are no longer guaranteed the existence of the slides of Step 3 -- it may be the case that $\n$ curves necessarily intersect the disks $\D_1$ and $\D_2$.
\end{remark}

\begin{remark}
The observant reader might notice that we call our process the Seifert solid {\emph{procedure}}, rather than {\emph{algorithm}}. An algorithm gives an output completely determined from the input, independent of further choices. A procedure may require additional choices for the output to be determined.  In the procedure we give in this section to find a description of a Seifert solid for a 2--knot, we are forced to choose compressing circles for surfaces in $S^3$. These circles are generally not unique (and in fact, different choices can determine different Seifert solids), so we do not refer to this procedure as an algorithm.
\end{remark}

\subsection{Some examples}

In this subsection, we carry out the procedure described above for a couple of specific examples.  The first is the spun trefoil.  In Figure~\ref{ex1a}, we see a tri-plane diagram for the spun trefoil coming from~\cite{MeiZup_bridge1}, followed by the result of performing tri-plane moves so that the induced Seifert surfaces $F_{i}$ satisfy conditions (a), (b), and (c) from Step 1 above.

\begin{figure}
  \centering
\labellist
\pinlabel {$\alpha'$} at 70 125
\pinlabel {$\beta'$} at 210 124
\pinlabel {$\cong$} at 402 66
\endlabellist
  \includegraphics[width=.9\textwidth]{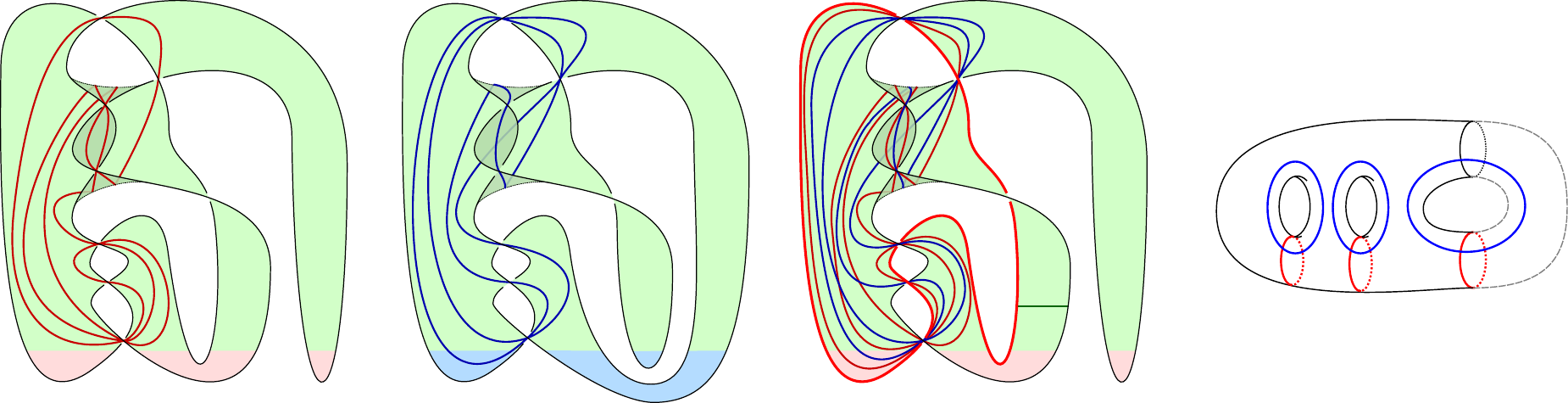} 
  \caption{Leftmost: The curves $\alpha'$ in $F_3$. Second: The curves $\beta'$ in $F_2$. Third: we add some boundary curves of $F_2$ to $\alpha$ to obtain $\alpha^*$ and some arc to $\beta'$ to obtain $\beta^*$. Rightmost: we simplify the resulting Heegaard diagram $(\Sigma;\alpha^*,\beta^*)$ to see that it is a diagram of $S^3$. Thus, the initial 2--knot bounds a copy of $B^3$ in $S^4$, so is unknotted.}
\label{ex2b}
\end{figure}

In the top panel of Figure~\ref{ex1b}, we find the compressing curves $\A$ on $\widehat F_3$ and $\n$ on $\widehat F_2$.  Note that in this case $\D_3$ contains two disks, so that $P = \D_1 \cup \D_2$ is an annulus, and $Q = \widehat F_3 \cup P$ can be obtained by identifying the two boundary components of $\widehat F_3$.  Under this identification, the identified boundary components constitute the third curve in the cut system $\A^*$.  In the second panel at left, we slide the two curves of $\A$ over the third curve of $\A^*$ in $Q$.  In the second panel at right, we slide the two curves of $\n$ over a boundary component as shown to get the curves $\n' \subset F_{3}$ (which are identical to the image of $\A$ under the slides described above).  Finally, the third curve of $\n^*$ consists of the teal arc depicted in $F_{3}$ and a spanning arc in the annulus $A$, or equivalently, we can identify the endpoints of the teal arc.  In the lower panel, we see the diagram for the Seifert solid, the standard (once-stabilized) Heegaard diagram for $\#^2(S^1 \X S^2)$.

\begin{remark}
These diagrams and arguments easily generalize to produce the Seifert solid $\#^{p-1}(S^1 \X S^2)$ for the spun $(p,2)$-torus knot. Miyazaki proved that the degree of the Alexander polynomial (over $\mathbb{Q}[t,t^{-1}]$) is a lower bound for the second Betti number of any Seifert solid \cite{miyazaki}. Since the degree of the Alexander polynomial of $T(2,p)$ is $p-1$, these solids are minimal in the sense that the corresponding 2--knots cannot bound any 3-manifold with a smaller second Betti number, e.g.\ a fewer number of $S^1 \X S^2$ summands.
\end{remark}

For the second example, we find a Seifert solid for the 1-twist spun trefoil (which is unknotted by~\cite{Zeeman}).  In Figure~\ref{ex2a}, we include a simplified tri-plane diagram for the 1-twist spun trefoil along with the surfaces $\widehat F_2$ and $\widehat F_3$ this diagram generates.

Next, we find the compressing curves $\A$ for $\widehat F_3$ and $\n$ for $\widehat F_2$.  As in the spun trefoil example above, $P = \D_1 \cup \D_2$ is an annulus, so we view $Q$ as being obtained by identifying the two boundary components of $\widehat F_3$, with this identified boundary the third curve in $\A^*$.  Figure~\ref{ex2b} shows the curves $\A$, $\n$, and the union of the sets in $Q$, yielding the standard diagram for $S^3$, in which the third curve of $\n^*$ appears as a teal arc with boundary points identified (as above).  Note that the existence of the curves $\A$ and $\n$ is guaranteed by Proposition~\ref{spancap}; in practice, however, these curves are found using ad hoc methods.  

%%%%%%%%%%%%%%%%%%%%%%%%%%%%%%%%%%%%%%%%%%%%%%%%%%%%%%%%%%%%%%%%%%%
%%%%%%%%%%%%%%%%%%%%%%%%%%%%%%%%%%%%%%%%%%%%%%%%%%%%%%%%%%%%%%%%%%%

\section{Spinal Seifert solids}
\label{sec:afss}

A natural aspect of the study of Seifert surfaces for links in the 3--sphere is the consideration their exterior.
We call a Seifert surface $F$ for $L$ \emph{canonical} if it is isotopic to a surface obtained by applying Seifert's procedure to a diagram for $L$.
We call a Seifert surface $F$ \emph{free} if its exterior $S^3\setminus\nu(F)$ is a 3--dimensional handlebody -- equivalently, has free fundamental group.
It is an easy exercise to see that a canonical Seifert surface is free, provided that it is connected; so every link admits a free Seifert surface, by the application of Seifert's algorithm to a non-split diagram.
However, such a surface can be far from minimal genus.
M.~Kobayashi and T.~Kobayashi showed that the difference between the genus of a knot and the minimal genus of a free Seifert surface for the knot can be arbitrarily large, and that moreover the difference between the minimal genus of a free Seifert surface for a knot and the minimal genus of a canonical Seifert surface can also be arbitrarily large~\cite{Kob-Kob}. (In fact, they show that both of these differences can be made arbitrarily large at the same time.)

In this section, we introduce 4--dimensional analogues of the notions of canonical and free Seifert surfaces.
Going forward, let $\Ss\subset S^4$ be a surface-link admitting a Seifert solid.
(This is equivalent to the condition that $\Ss$ be orientable or have normal Euler number zero.)
We call a Seifert solid $Y$ \emph{canonical} if it is isotopic to a Seifert solid obtained by the procedure given in Section~\ref{subsec:existence} (see Propositions~\ref{orspan} and~\ref{nonorspan}). 
We call a Seifert  solid $Y$ \emph{spinal} if  $S^4\setminus\nu(Y)$ deformation retracts onto a finite 2--complex.
Equivalently, $S^4\setminus\nu(Y)$ can be built with handles of index at most two.

\begin{theorem}
\label{thm:free}
	If a surface-knot $\Ss$ admits a Seifert solid, then it admits a canonical Seifert solid that is spinal.
\end{theorem}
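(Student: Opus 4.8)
The plan is to run the canonical construction of Subsection~\ref{subsec:existence} and then analyze the resulting Seifert solid's exterior through the decomposition it inherits from the bridge trisection. Since $\Ss$ admits a Seifert solid, it is orientable or satisfies $e(\Ss)=0$, so Propositions~\ref{orspan} and~\ref{nonorspan} apply to a tri-plane diagram of $\Ss$ and produce a canonical Seifert solid $Y=V_1\cup V_2\cup V_3$, where $V_i\subset X_i$ is a (possibly nonorientable) $3$--dimensional handlebody with $V_{i-1}\cap V_i=F_i\subset H_i$ a spanning surface for the trivial tangle $\T_i$, and $V_i\cap\partial X_i=\widehat F_i$ a Seifert's-algorithm (orientable case) or checkerboard (nonorientable case) spanning surface associated to $\DD_i$. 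First I would perform interior Reidemeister moves and mutual braid transpositions---exactly as in Step~1 of Subsection~\ref{subsec:getheegaard}---to arrange the easily-satisfied connectivity hypotheses: that each $\widehat F_i$, each $F_i$, and $Y$ itself are connected. These are the ``easily satisfied connectivity conditions'' referred to after the statement, and the argument below goes through verbatim whenever they hold, which yields the stronger claim recorded there.

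Next I would observe that $W:=S^4\setminus\nu(Y)$ decomposes along the trisection as $W=W_1\cup W_2\cup W_3$, with $W_i:=X_i\setminus\nu(V_i)\cong B^4\setminus\nu(V_i)$, consecutive sectors meeting in $W_{i-1}\cap W_i=G_i:=H_i\setminus\nu(F_i)$, and triple overlap $W_1\cap W_2\cap W_3=\Sigma\setminus\nu(\mathbf{x})$, a $b$--holed $2$--sphere. The connectivity arrangements make this decomposition ``handlebody-type'': since $\widehat F_i$ is a connected Seifert's-algorithm (or checkerboard) surface, it is free, so $\partial X_i\setminus\nu(\widehat F_i)$ is a $3$--dimensional handlebody and $V_i$ is an unknotted handlebody in the $4$--ball $X_i$---more precisely, the compression-body description of $V_i$ in the proof of Proposition~\ref{spancap} shows that $V_i$ is isotopic in $X_i$ to a regular neighborhood of its spine graph $\Gamma_i$, which is unknotted because $\widehat F_i$ compresses to disks within $\partial X_i$. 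Likewise each gluing region $G_i$ is a $3$--dimensional handlebody, because $F_i$ is a connected, free spanning surface for the trivial tangle $\T_i$ in the ball $H_i$; and $\Sigma\setminus\nu(\mathbf{x})$ is planar. Thus $W$ carries a trisection-type decomposition in which every sector is the exterior of an unknotted handlebody in a $4$--ball and every gluing region is a handlebody or a planar surface.

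Finally I would build $W$ from these pieces while controlling handle indices. Building $W$ from $W_1$ contributes handles of index at most $2$ (the single $3$--handle that $W_1$ would need on its own caps a $3$--sphere boundary component that is interior to $W$ and is glued to $W_2$ and $W_3$ in the assembly). Attaching $W_2$ along the handlebody $G_2\subset\partial W_1$ again contributes only handles of index at most $2$, since $W_2$ collapses rel $G_2$ onto a $2$--complex---using that $G_2$ is compressible and $W_2$ is an unknotted-handlebody exterior. The delicate step is attaching the last sector $W_3$, which is glued to $W_1\cup W_2$ along $G_3\cup G_1$ together with a collar of the central punctured sphere; for a generic trisection-type decomposition this final stage introduces $3$--handles, and the content of the theorem is that the decomposition coming from the canonical construction does not. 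I expect this to be the main obstacle. I would handle it by extracting, directly from the canonical construction, a Morse function on $W$ (equivalently a self-indexing handle decomposition) with no critical points of index $3$ or $4$: the compression bodies that build the $V_i$ are assembled from index--$1$ moves applied to the trivial disk systems $\D_i$ followed by trivial $3$--ball caps, so any $3$--handle that a careless assembly of $W$ would create can be cancelled against a $2$--handle already present; equivalently, $W$ deformation retracts onto the explicit $2$--complex assembled from the spines $\Gamma_i$ of the $V_i$ along the cores of the handlebodies $G_i$. The nonorientable case requires only replacing product neighborhoods by twisted $I$--bundles and uses freeness of the trivial M\"obius bands appearing in the proof of Proposition~\ref{nonorspan}; both are handled uniformly by Proposition~\ref{spancap}.
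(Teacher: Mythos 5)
Your setup coincides with the paper's: you build the canonical Seifert solid $Y=V_1\cup V_2\cup V_3$ with the connectivity conditions arranged, decompose $W=S^4\setminus\nu(Y)$ into the sectors $W_i=X_i\setminus\nu(V_i)$ meeting along $G_i=H_i\setminus\nu(F_i)$, and correctly record the two key facts: each $W_i$ admits an (absolute) handle decomposition with handles of index at most $2$, and each $G_i$ is a $3$--dimensional handlebody because $F_i$ is a connected, free spanning surface. The gap is in the assembly. Your strategy is sequential and relative: you attach $W_2$ to $W_1$ by claiming $W_2$ ``collapses rel $G_2$ onto a $2$--complex,'' and you then flag the attachment of $W_3$ as the delicate step and propose to resolve it by cancelling $3$--handles against $2$--handles. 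Neither claim is justified: an absolute handle decomposition of $W_2$ with indices $\leq 2$ does not by itself give a handle decomposition of $W_2$ \emph{relative to} $G_2$ with indices $\leq 2$, and the cancellation assertion is not an argument. Worse, your proposed witness for spinality --- that $W$ deformation retracts onto a $2$--complex assembled from the spines $\Gamma_i$ of the $V_i$ --- cannot be right: the $\Gamma_i$ are spines of the handlebodies $V_i\subset Y$, so they lie in $Y$ and are disjoint from $W$; the relevant $2$--complex must come from spines of the complements, not of $Y$.

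The fix is simpler than the machinery you invoke, and it is the paper's actual argument: realize $W$ as the disjoint union $W_1\sqcup W_2\sqcup W_3$ together with thickened gluing regions $G\times I$. A $3$--dimensional handlebody contributes, upon thickening and attaching to the disjoint union along $G\times\{0\}\sqcup G\times\{1\}$, exactly one $4$--dimensional $1$--handle per $0$--handle of $G$ and one $4$--dimensional $2$--handle per $1$--handle of $G$. Since every gluing region is a handlebody --- including the region along which the last sector attaches, which is $\partial X_3\setminus\nu(\widehat F_3)$ and is a handlebody precisely because $\widehat F_3$ is a connected canonical (hence free) Seifert surface in $S^3$ --- the entire assembly contributes only $1$-- and $2$--handles on top of the index--$\leq 2$ decompositions of the $W_i$. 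In particular the final gluing is no more delicate than the first two, and no relative decompositions or handle cancellations are needed.
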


\begin{proof}
	First, note that in the proof of Propositions~\ref{orspan} and \ref{nonorspan}, it is possible to arrange that each Seifert surface $F_i$ is connected:
	For example, this is assured if each $\DD_i\cup\varepsilon$ is non-split.
	Let $Y$ be a canonical Seifert solid for $\Ss$ given by Proposition~\ref{orspan} or Proposition~\ref{nonorspan} such that the canonical surface $F_i = Y\cap H_i$ is connected for each $i\in\Z_3$.
	We make use of the notation of the proof of Proposition~\ref{orspan} in what follows.
	
	Recall that $V_i = X_i\cap Y$ is a handlebody with $\partial V_i = \widehat F_i\cup\D_i$.
	Moreover, $V_i$ is built relative to $\widehat F_i$ by attaching 3--dimensional 2--handles and 3--handles.
	It follows that $X_i\setminus\nu(V_i)$ can be built with 4--dimensional 0--, 1--, and 2--handles.
	
	Next, recall that $F_i$ is a canonical Seifert surface for the link $\DD_i\cup\varepsilon$, considered in $S^3 = H_i\cup_\Sigma B^3$.
	Since we have assumed $F_i$ is connected, we have that $F_i$ is free in $H_i\cup_\Sigma B^3$.
	Since $\varepsilon\subset\partial H_i$, it follows that $H_i\setminus F_i$ is also a 3--dimensional handlebody.
	 
	Finally, we can build $S^4\setminus\nu(Y)$ by taking the $X_i\setminus\nu(V_i)$ and gluing them along the $H_i\setminus\nu(F_i)$.
	Since the three gluings occur along 3--dimensional handlebodies, it follows that $S^4\setminus\nu(Y)$ is obtained from the disjoint union of the $X_i\setminus\nu(V_i)$ by attaching 4--dimensional 1-- and 2--handles.
	Because each of the $X_i\setminus\nu(V_i)$ were built with 4--dimensional handles of index at most two, the same is true for $S^4\setminus\nu(Y)$.
	This shows that $Y$ is spinal, as desired.
\end{proof}

When studying Seifert surfaces, the genus of the surface is the obvious measure of complexity that one might try to minimize.
In contrast, there are many ways one might try to quantify the complexity of a Seifert solid $Y$ for a surface-knot; indeed, any complexity one might associate to a 3--manifold could be interesting to consider.
Here, we content ourselves to give some examples showing that there is at least one sense in which a simple Seifert solid for a surface-knot can be arbitrarily far from being spinal.

\begin{theorem}
\label{thm:not_free}
	Given any $n\in\N$, there exists a 2--knot $\K$ that bounds a Seifert solid $Y$ homeomorphic to $(S^1\times S^2)^\circ$ such that $S^4\setminus\nu(Y)$ requires at least $n$ 4--dimensional 3--handles.
\end{theorem}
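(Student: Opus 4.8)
The plan is to reduce the statement to a group-theoretic property of the exterior of the Seifert solid, and then to realize that property by ``hollowing out'' a tubular neighborhood of a spun classical knot. In fact the $2$--knot $\K$ produced will be the \emph{unknotted} one.

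\textbf{Step 1 (from $3$--handles to $\pi_1$).} Let $Y\cong(S^1\times S^2)^\circ$ be a Seifert solid in $S^4$ and put $X=S^4\setminus\nu(Y)$, a compact connected $4$--manifold. Its boundary is the double $DY$, and doubling $(S^1\times S^2)^\circ$ along its $S^2$ boundary gives $DY\cong(S^1\times S^2)\#(S^1\times S^2)$, so $\pi_1(\partial X)$ is free of rank $2$. I claim that every handle decomposition of $X$ has at least $\mathrm{rank}\,\pi_1(X)-2$ three--handles. Since $\partial X$ is connected and nonempty, $X$ admits a handle decomposition with one $0$--handle and no $4$--handle (a standard fact); reading it upside down writes $X$ as built from the collar $\partial X\times[0,1]$ by attaching $h_3$ one--handles, then $h_2$ two--handles, then $h_1$ three--handles, then a single four--handle, where $h_i$ counts the $i$--handles of the original decomposition. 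Handles of index at least $2$ add no generators to $\pi_1$, so $\pi_1(X)$ is generated by $\pi_1(\partial X)$ together with at most $h_3$ further elements; hence $\mathrm{rank}\,\pi_1(X)\le 2+h_3$, as claimed. Thus it is enough to build, for each $n$, a $2$--knot $\K$ and a Seifert solid $Y\cong(S^1\times S^2)^\circ$ with $\mathrm{rank}\,\pi_1\!\big(S^4\setminus\nu(Y)\big)\ge n+2$.

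\textbf{Step 2 (the construction).} Let $k\subset S^3$ be a classical knot and $\sigma k\subset S^4$ its spun $2$--knot, so $\pi_1\!\big(S^4\setminus\nu(\sigma k)\big)\cong\pi_1\!\big(S^3\setminus\nu(k)\big)$ (spinning preserves the knot group). Then $M:=\partial\nu(\sigma k)\cong S^2\times S^1$ separates $S^4$ into $B:=\nu(\sigma k)\cong S^2\times D^2$ and $A:=S^4\setminus\nu(\sigma k)$. Delete an open $3$--ball $b$ from $M$ to obtain an embedded $Y\cong(S^1\times S^2)^\circ$. Its boundary $\partial Y=\partial b$ bounds the flat $3$--ball $\overline{b}$, so $\K:=\partial Y$ is the unknotted $2$--knot. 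Working in a bicollar $M\times[-1,1]$ of $M$, with $Y=(M\setminus\mathring b)\times\{0\}$, one checks that removing $\nu(Y)$ dents the collar of $M$ on both sides and leaves two pieces, diffeomorphic to $A$ and to $B$, joined along the $3$--ball $\overline{b}$ in their boundaries. Hence $X=S^4\setminus\nu(Y)\cong A\,\natural\,(S^2\times D^2)$, so $\pi_1(X)\cong\pi_1(A)\ast\pi_1(S^2\times D^2)\cong\pi_1\!\big(S^3\setminus\nu(k)\big)$; and $\partial X\cong M\#M$, consistent with Step 1.

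\textbf{Step 3 (large rank).} It remains to pick $k=k_n$ with $\mathrm{rank}\,\pi_1\!\big(S^3\setminus\nu(k_n)\big)\ge n+2$. Such knots exist: if $k_0$ is a knot whose $p$--fold cyclic branched cover has nontrivial first homology, then for $k_n=\#^{N}k_0$ the first homology of the corresponding cover of $S^3\setminus\nu(k_n)$ grows linearly in $N$, while the Reidemeister--Schreier rewriting process bounds $\mathrm{rank}\,\pi_1\!\big(S^3\setminus\nu(k_n)\big)$ from below by a linear function of that homological rank; a large enough $N$ then works. With this $k_n$, the unknotted $2$--knot $\K$ bounds the Seifert solid $Y_n\cong(S^1\times S^2)^\circ$, and by Step 1 the exterior $S^4\setminus\nu(Y_n)$ requires at least $n$ three--handles.

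\textbf{The main obstacle.} The architecture above needs care, not cleverness, at three spots: the handle normalization and its upside-down reading in Step 1; the diffeomorphism $S^4\setminus\nu(Y)\cong A\,\natural\,(S^2\times D^2)$ in Step 2, which calls for an honest local model of how a bicollar of $Y$ meets the separating hypersurface $M$; and the lower bound on the fundamental-group rank of a classical knot exterior in Step 3. I expect the last of these to be the one that must be stated most carefully, although the Reidemeister--Schreier estimate against homology of cyclic covers makes it fully tractable.
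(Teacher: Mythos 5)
Your proof is correct, but the construction in Step 2 is genuinely different from the one in the paper. The paper takes $K=\mathrm{Wh}_0(J\#\overline J)$, surgers its genus-one Seifert surface along the standard ribbon disk for the derivative curve $J\#\overline J$ to get a solid torus $V$ cobounded by the Seifert surface and a slice disk $D$, and then doubles: $\K=D\cup\overline D$ and $Y=V\cup\overline V\cong(S^1\times S^2)^\circ$; the computation $\pi_1(S^4\setminus\nu(Y))\cong\pi_1(S^3\setminus\nu(J))$ then passes through the ribbon disk exterior $(S^3,J)^\circ\times I$. You instead puncture the hypersurface $\partial\nu(\sigma k)\cong S^2\times S^1$ for a spun knot, so that $\K$ is the \emph{unknotted} 2--sphere and $S^4\setminus\nu(Y)\cong A\,\natural\,(S^2\times D^2)$ with $A$ the spun-knot exterior. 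Your version is shorter, makes the exterior completely explicit, and has the pleasant extra content that the phenomenon already occurs for the unknot; the paper's version produces Seifert solids arising more organically from Seifert surfaces and ties the examples to Cochran's 2--knots. Step 1 is essentially the paper's argument (turn the decomposition upside down and count relative 1--handles against $\mathrm{rank}\,\pi_1$ minus the contribution of $\partial X$); note only that your normalization to one 0--handle and no 4--handles is unnecessary for the claim about \emph{every} decomposition, since original 4--handles become relative 0--handles and contribute no generators, so the same count applies verbatim. In Step 3 the paper simply cites Weidmann for connected sums of trefoils, whereas your Schreier-index-plus-branched-cover-homology estimate is a correct, self-contained substitute that yields a weaker but sufficient linear lower bound on rank.
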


\begin{proof}
	Let $J$ be an arbitrary knot, and let $K = \text{Wh}_0(J\#\overline J)$ be the untwisted Whitehead double of the connected sum of $J$ with its mirror.
	Let $F$ be the standard genus one Seifert surface for $K$, and let $\gamma$ be the curve on $F$ that is isotopic to $J\#\overline J$.
	(Alternatively, $F$ is obtained by taking a 0--framed annular thickening of a curve $\gamma$ isotopic to $J\#\overline J$ and plumbing on a Hopf band.)
	
	Let $E$ be the standard ribbon disk for $\gamma$, so that $(B^4, E) = (S^3,J)^\circ\times I$. The surface $F$ can be surgered along $E$ in the 4--ball to get a slice disk $D$ for $K$, and the trace of this surgery yields a solid torus $V$ with $\partial V = F\cup D$.
	
	Let $\K=D\cup_K\overline D$ be the 2--knot obtained by doubling $D$, and let $Y=H\cup_F\overline H$ be the double of $H$ along $F$.
	Then, $Y$ is a Seifert solid for $\K$ and $Y\cong(S^1\times S^2)^\circ$.
	
	We claim that $\pi_1(S^4\setminus\nu(Y))\cong\pi_1(S^3\setminus\nu(J))$.
	First, we have $\pi_1(S^4\setminus\nu(Y))\cong\pi_1(B^4\setminus H)$, since the former exterior is the double of the latter exterior along the exterior of $F$ in $S^3$ and $\pi_1(S^3\setminus\nu(F))$ surjects onto $\pi_1(B^4\setminus H)$ under inclusion.
	Next, by construction, $H$ is obtained by thickening the slice disk $E$ and attaching a trivial 3--dimensional 1--handle. It follows that
	$$\pi_1(B^4\setminus\nu(H))\cong\pi_1(B^4\setminus\nu(E))\cong\pi_1(S^3\setminus\nu(J)),$$
	as desired.
	
	To complete the proof, let $n\in\N$ be given, and choose $J$ to be any knot with $\text{rank}(\pi_1(S^3\setminus\nu(J)))\geq n+2$ (e.g.\ take $J$ to be a connected sum of $n+1$ trefoils \cite{Weidmann}). 
	The exterior $S^4\setminus\nu(Y)$ can be built relative to $\partial(S^4\setminus\nu(Y))\cong (S^1\times S^2)\#(S^1\times S^2)$ with some number of 4--dimensional 1--, 2--, 3--, and 4--handles.
	Since the 1--handles correspond to generators of the fundamental group, at least $n$ are required; the boundary $\partial(S^4\setminus\nu(Y))$ contributes only two to the rank of the fundamental group. Similarly, since we can obtain another presentation of $\pi_1(S^4\setminus\nu(Y))$ with generators corresponding to 3--handles, the number of 3--handles in this decomposition is at least $n+2$.%re must be at least $n+2$ 3--handles in this decomposition.%	%Turning the handle decomposition upside-down so that the 3--handles correspond to generators, we find that at least $n$ 3--handles are required.
\end{proof}

We note that the construction of $\K$ given in the above proof is closely related to an interesting construction of 2--knots given by Cochran~\cite{cochran}.

Next, we observe that many important examples of Seifert solids are, in fact, spinal:
\begin{enumerate}
	\item Every ribbon 2--knot bounds a Seifert solid $Y$ that is homeomorphic to $(\#^m(S^1\times S^2))^\circ$ for some $m$ \cite{Yanagawa}. The manifold $Y$ is obtained by taking a Seifert surface $F$ for some ribbon knot in an equatorial $S^3$, thickening it, and attaching trivial 2--handles above and below the equator. By attaching tubes to $F$ (at the cost of increasing $m$), we can arrange for $F$ to be free. Then $Y$ is spinal.
	%connected sum The standard Seifert solid for a ribbon 2--knot that is homeomorphic to $(\#^m(S^1\times S^2))^\circ$ is spinal; see~\cite[Section~2.5.1]{CKS}. \jeff{not clear that the Seifert surface is free, so maybe this is not obvious for minimal $m$.}\maggie{I don't think this is true in general; you need the middle Seifert surface to be free but it could be a tube resolution of any ribbon surface. For example, it would be consistent with \cite{Yanagawa} (original reference) to get a Seifert solid for the unknotted 2--sphere whose middle cross-section is a disk with a tube in the shade of a Figure eight knot tubed on, compressed with disks on either side. The middle Seifert surface definitely isn't free. I don't think we want to specify the Seifert surface in the middle, if we want to follow the usual algorithm given any ribbon presentation.}
	\item If $\K$ is fibered with fiber $Y$, then $S^4\setminus\nu(Y)\cong Y\times I$ is an spinal, since $Y$ is a punctured 3--manifold.
	\item Connected Seifert solids arising from broken surface diagrams via the construction given by Carter and Saito~\cite{SeifertAlgorithm} are spinal. Recall that a connected, canonical Seifert surface is free because it deformation retracts to a graph so that on each edge, there is one local maximum and no local minima with respect to the radial height function on $S^3$. (Here, the vertices of the graph correspond to the disks produced in Seifert's procedure while the edges correspond to the half-twisted bands.) This ensures that the exterior of a canonical surface can be built with 0-- and 1--handles. Similarly, a Seifert solid constructed \`a la \cite{SeifertAlgorithm} deformation retracts to a 2--complex with one local maximum and no other critical points in the interior of each 1-- and 2--cell. Thus, the exterior of such a Seifert solid can be built with 0--, 1--, and 2--handles.
\end{enumerate}

Finally, we can formulate a question analogous to the 3--dimensional results in~\cite{Kob-Kob} in the setting of surface-knots.

\begin{question}
	Define the \emph{genus} of an orientable surface-knot $\Ss$ in $S^4$ to be the minimal first Betti number of any Seifert solid bounded by $\Ss$, and define the \emph{spinal genus} and \emph{canonical genus} similarly, using spinal Seifert solids and canonical Seifert solids, respectively.  Do there exist surface-knots for which these three measures of complexity differ?
\end{question}

We remark that using techniques as in the proof of Theorem~\ref{thm:not_free}, one can show that for some of the known classical knots $K$ whose genus and free genus are sufficiently different (see~\cite{Moriah}, for example), the spun knots $\Ss(K)$ admit low-complexity non-spinal Seifert solids, whereas the obvious spinal and canonical Seifert solids have greater complexity.  However, it is likely to be considerably more difficult to obstruct the existence of low-complexity spinal or canonical Seifert solids, even for these examples.

%%%%%%%%%%%%%%%%%%%%%%%%%%%%%%%%%%%%%%%%%%%%%%%%%%%%%%%%%%%%%%%%%%%
%%%%%%%%%%%%%%%%%%%%%%%%%%%%%%%%%%%%%%%%%%%%%%%%%%%%%%%%%%%%%%%%%%%
\section{On standardness of bridge trisections}
\label{sec:c=b}
%%%%%%%%%%%%%%%%%%%%%%%%%%%%%%%%%%%%%%%%%%%%%%%%%%%%%%%%%%%%%%%%%%%
%%%%%%%%%%%%%%%%%%%%%%%%%%%%%%%%%%%%%%%%%%%%%%%%%%%%%%%%%%%%%%%%%%%

The goal of this section is to prove Theorem~\ref{thm:c=b}, which states that a $(b;c_1,c_2,c_3)$--bridge trisection that satisfies $c_i\geq b-1$ for some $i\in\Z_3$ can be completely decomposed into standard pieces.  This proves Conjecture~4.3 of~\cite{MeiZup_bridge1}, and the theorem can be viewed as the bridge trisection analog of the main result in~\cite{MSZ}, which states that every $(g;k_1,k_2,k_3)$--trisection with $k_i \geq g-1$ for some $i$ is standard in that it decomposes into genus one summands.

We encourage the reader to recall the notions of \emph{perturbation} and \emph{connected summation} for bridge trisections. The former was first introduced in Section~6 of~\cite{MeiZup_bridge1}, where it was referred to as stabilization, and the latter can be reviewed in Subsection~2.2 of~\cite{MeiZup_bridge1}.  See also~\cite[Section~3]{MTZ_graph} for a succinct description of these concepts.

We call a surface-link an \emph{unlink} if it is the split union of unknotted surface-knots, though we allow the topology of each component to vary.  For example, one might have a 2--component unlink that is the split union of an unknotted 2--sphere and an unknotted projective plane.  (See Subsection~2.2 of~\cite{MTZ_graph} and Subsection~\ref{subsec:unknotted} above for a brief discussion of unknotted surface-knots.)

Before proving Theorem~\ref{thm:c=b} in generality, we recall the case in which $c_i=b$ for some $i\in\Z_3$.  This was addressed as Proposition~4.1 of~\cite{MeiZup_bridge1}. A bridge trisection is called \emph{completely decomposable} if it is a disjoint union of perturbations of  one-bridge and two-bridge trisections.

\begin{proposition}{\cite[Proposition~4.1]{MeiZup_bridge1}}
\label{prop:c=b}
	Let $\TT$ be a $(b;c_1,c_2,c_3)$--bridge trisection with $c_i=b$ for some $i\in\Z_3$. Then, $\TT$ is completely decomposable, and the underlying surface-link is the unlink of $\min_i\{c_i\}$ 2--spheres.
\end{proposition}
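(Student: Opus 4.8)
The plan is to induct on the bridge number $b$. After relabeling, assume $c_1 = b$, and observe first that this is the largest value $c_1$ can take: every patch (boundary-parallel disk) of $\D_1$ must contain at least one arc of the tangle $\T_1 = \D_3\cap\D_1$, and $\T_1$ has exactly $b$ arcs, so $c_1 = b$ forces each patch of $\D_1$ to contain exactly one arc of $\T_1$ and exactly one arc of $\T_2 = \D_1\cap\D_2$. Consequently each component of the unlink $\partial\D_1\subset\partial X_1$ is a $1$--bridge unknot, i.e.\ $\partial\D_1$ sits in minimal bridge position; in particular, when $b\geq 2$ the two tangle panels whose union is $\partial\D_1$ form a split classical diagram.

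For the induction, the base case $b=1$ is the $(1;1,1,1)$--bridge trisection of the unknotted $2$--sphere, and $\min\{c_i\}=1$. For the inductive step with $b\geq 2$ there are two cases. If $\TT$ is not reduced, deperturb once; an elementary deperturbation lowers $b$ and exactly one $c_j$ by one, and since no $c_j$ can exceed the bridge number, the deperturbed coordinate must be $j=1$. The result is a $(b-1;c_1-1,c_2,c_3)$--bridge trisection still satisfying the hypothesis, so the inductive conclusion follows for it and hence (re-perturbing) for $\TT$, the multiset-minimum being unchanged by this elementary perturbation. If $\TT$ is reduced and $b=2$, then $\TT$ is a single $2$--bridge trisection — one of the atoms of complete decomposability — and the constraint $\chi(\Ss)=\sum_i c_i-b$ forces $\Ss$ to be an unknotted $2$--sphere rather than a projective plane. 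If $\TT$ is reduced and $b\geq 3$, then splitness of $\partial\D_1$ makes $\TT$ reducible: there is a decomposing $3$--sphere exhibiting $\TT$ as a split union $\TT'\sqcup\TT''$ with $b=b'+b''$, $b',b''\geq 1$, and $c_1 = c_1'+c_1''$; combining this with $c_1'\leq b'$ and $c_1''\leq b''$ gives $c_1'=b'$ and $c_1''=b''$, so both summands satisfy the hypothesis and are completely decomposable by induction, each presenting an unlink of $\min_i\{c_i'\}$ (resp.\ $\min_i\{c_i''\}$) two--spheres. Reassembling, $\TT$ is completely decomposable and its surface-link is an unlink of $\min_i\{c_i'\}+\min_i\{c_i''\}$ two--spheres; a short bookkeeping check identifies this with $\min_i\{c_i\}$ (here the index $1$ realizes the maximum in each piece, so the two minima can be arranged at a common index), closing the induction.

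The main obstacle is the input to the reduced case with $b\geq 3$: promoting the abstract split of the $b$--component unlink $\partial\D_1$ to a genuine decomposing $3$--sphere for the whole trisection $\TT$ — one that meets $\D_2$ and $\D_3$, not just $\D_1$, in trivially embedded disks compatible with the trisection structure. Coordinating this sphere across all three sectors, together with tracking how the multiset $\mathbf{c}$ distributes under the splitting so as to pin down the number of spherical components, is where the real work lies; the deperturbation bookkeeping and the identification of the $b\leq 2$ atoms are then routine.
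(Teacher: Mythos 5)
This proposition is not proved in the paper at all: it is imported as a black box from \cite[Proposition~4.1]{MeiZup_bridge1}, so your argument has to stand on its own. Your preliminary observations are fine: $c_1\leq b$ always holds because each patch of $\D_1$ contributes at least one arc to each of $\T_1$ and $\T_2$, equality forces every component of $\partial\D_1$ to be $1$--bridge, the deperturbation bookkeeping (the lowered index must be $i=1$, the minimum of the $c_i$ is preserved) is correct, and reducing to the classification of $b\leq 2$ trisections for the base case is legitimate.

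The problem is the step you yourself flag: in the unperturbed case with $b\geq 3$ you assert that ``splitness of $\partial\D_1$ makes $\TT$ reducible'' and then concede that promoting a splitting sphere for the unlink $\partial\D_1\subset\partial X_1$ to a decomposing $3$--sphere for all of $\TT$ ``is where the real work lies.'' That step is not a technicality to be deferred --- it \emph{is} the proposition. A splitting sphere for $\partial\D_1$ lives in a single boundary $3$--sphere $\partial X_1=H_1\cup_\Sigma\overline{H_2}$, and nothing you have said forces it to meet $\Sigma$ in a single circle, to be isotopic to a sphere splitting the third tangle $\T_3$, or to extend to spheres in $\partial X_2$ and $\partial X_3$ splitting $\D_2$ and $\D_3$ compatibly. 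Worse, for a trisection whose underlying surface is an unlink of spheres, ``unperturbed implies split'' is essentially equivalent to complete decomposability, so without an actual construction of that sphere the argument is circular. The route actually used in \cite{MeiZup_bridge1} (and echoed in the proof of Theorem~\ref{thm:c=b} above) avoids this entirely: since $\partial\D_1$ is a $b$--component unlink in $b$--bridge position, the uniqueness of minimal bridge splittings of unlinks (\cite[Proposition~2.3]{MeiZup_bridge1}) produces bridge disks for $\T_1$ and $\T_2$ whose shadows coincide as an embedded collection of $b$ bigons; one then isotopes $\T_1$ onto $\T_2$, flattens the patches $\D_1$ into $\partial X_1$, exhibits $\Ss$ as the double of the trivial disk system $\D_3$ (an unlink of $c_2=c_3$ spheres), and locates the perturbation points explicitly. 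You would need to carry out an argument of this kind --- or genuinely construct your decomposing sphere --- before the induction closes. (A small secondary slip: in your reduced $b=2$ case, $\TT$ could be a split union of two $1$--bridge pieces, i.e.\ a $(2;2,2,2)$--trisection of a $2$--component unlink, so the conclusion there is not always a single $2$--sphere.)
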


Note that if $c_i=b$ for some $i\in\Z_3$, then $c_{i-1}=c_{i+1}$.  Similarly, in what follows we will see that if $c_i=b-1$ for some $i\in\Z_3$, then $|c_{i-1}-c_{i+1}|\leq 1$.  We now present and prove the main result of this section.

\begin{theorem}
\label{thm:c=b}
	Let $\TT$ be a $(b;c_1,c_2,c_3)$--bridge trisection with $c_i=b-1$ for some $i\in\Z_3$.  Then, $\TT$ is completely decomposable, and the underlying surface-link is either the unlink of $\min\{c_i\}$ 2--spheres or the unlink of $\min\{c_i\}$ 2--spheres and one projective plane, depending on whether $|c_{i-1}-c_{i+1}|=1$ or $c_{i-1}=c_{i+1}$.
\end{theorem}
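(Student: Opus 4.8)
The plan is to reduce the statement, via a sequence of deperturbations and connected-sum splittings, to the already-known case $c_i = b$ (Proposition~\ref{prop:c=b}), together with a single irreducible projective-plane summand when $c_{i-1} = c_{i+1}$. Without loss of generality assume $i=2$, so $c_2 = b-1$. The key object is the tangle $\T_2 = \D_1 \cap \D_2$ in $H_2$, which is a trivial tangle of $b$ strands meeting $\D_1$ in $c_1$ disks and $\D_2$ in $c_2 = b-1$ disks. Since the $c_2$ disks of $\D_2$ have $b$ endpoints on the bridge sphere $\Sigma$ on one side, exactly one of those $c_2$ disks is met by $\T_2$ in two strands and the rest in one strand each; dually for $\D_1$, the number $c_1$ is constrained. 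The first step is to set up the ``planar surface in a 3--manifold'' picture: the disks of $\D_1$ and $\D_2$, together with the shadows of $\T_2$, assemble into a planar surface $R$ properly embedded in a handlebody (the complement of a neighborhood of the other disk system), and the hypothesis $c_2 = b-1$ forces $R$ to have few boundary components relative to its position. This is precisely the setting of Scharlemann's and Bleiler--Scharlemann's results on planar surfaces: an incompressible, boundary-incompressible planar surface with few boundary components in a handlebody (or more precisely in the relevant 3--manifold pieces $H_i$) must be very simple — either boundary-parallel, or it forces a reducing sphere or an essential disk, which in the bridge-trisection dictionary corresponds to a deperturbation or a connected-sum decomposition.

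The main steps, in order, are: (1) Translate $c_2 = b-1$ into the statement that the patch-vs-strand count in $H_2$ forces a strand of $\T_2$ to be parallel into $\Sigma$ rel a single disk of $\D_1$ or $\D_2$ — i.e., the existence of a bridge-disk or a cancelling pair — UNLESS the configuration is already completely decomposable. (2) Apply the Bleiler--Scharlemann classification (\cite{BleSch_proj}) and Scharlemann's four-crossing/planar-surface theorem (\cite{Sch_4crit}) to the planar surface $R$ in the relevant 3--ball or handlebody to conclude that $R$ either compresses (giving a connected-sum splitting of $\TT$, by \cite[\S2.2]{MeiZup_bridge1}), boundary-compresses (giving a deperturbation), or is boundary-parallel (meaning the piece is already standard). (3) Induct on $b$: each connected-sum splitting or deperturbation strictly decreases $b$ (or the number of components) while preserving the hypothesis ``$c_i = b-1$ for some $i$'' — here one must check that the inequality $|c_{i-1} - c_{i+1}| \le 1$ is preserved and that the splitting distributes the patch counts correctly, which is the bookkeeping analog of the $k_i \ge g-1$ arithmetic in \cite{MSZ}. (4) In the base case, either $c_i = b$ for some $i$ and Proposition~\ref{prop:c=b} applies directly (giving the unlink of $\min\{c_i\}$ spheres, the $|c_{i-1} - c_{i+1}| = 1$ case), or the process terminates at an irreducible $(2;1,1,1)$- or $(3;2,2,2)$-type piece that one identifies by hand as a standard projective plane $P_\pm$ — accounting for the extra summand in the $c_{i-1} = c_{i+1}$ case; the sign ambiguity $P_+$ vs $P_-$ is immaterial since both are ``the'' unknotted projective plane in the statement.

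The hard part will be Step (1)–(2): making precise how the combinatorial hypothesis $c_2 = b-1$ produces an essential (or boundary-essential) planar surface in exactly the form to which Scharlemann's and Bleiler--Scharlemann's theorems apply, and then reading the geometric conclusion back as a perturbation/connected-sum move on $\TT$. In particular, one must rule out the possibility that the planar surface is essential and non-boundary-parallel yet gives no usable decomposition — this is where the precise bound on the number of boundary components (coming from $c_2 = b-1$, not merely $c_2 < b$) is essential, and it is the analog of the sharpness of $k_i \ge g-1$ in the trisection setting of \cite{MSZ}. I expect the connectivity/orientation bookkeeping needed to pin down whether the leftover summand is $P_+$, $P_-$, or absent (the $|c_{i-1}-c_{i+1}| \in \{0,1\}$ dichotomy) to be routine once the decomposition machinery is in place, following the normal Euler number computation already available from Proposition~\ref{spancap}(\ref{signitem}).
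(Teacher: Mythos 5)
Your high-level skeleton matches the paper's: induct on $b$, invoke Scharlemann~\cite{Sch_4crit} and Bleiler--Scharlemann~\cite{BleSch_proj}, reduce to Proposition~\ref{prop:c=b}, and peel off a single projective plane exactly when $c_{i-1}=c_{i+1}$. But the central mechanism in your Steps (1)--(2) does not work as described, and it is not the mechanism those two theorems actually provide. First, the object you propose to feed into the planar-surface theorems is wrong: $\D_1\cup_{\T_2}\D_2$ is a planar subcomplex of $\Ss$ spanning two $4$--dimensional sectors, not a properly embedded planar surface in a $3$--manifold, and \cite[Theorem~1.1]{Sch_4crit} and \cite[Theorem~1.3]{BleSch_proj} are not of the form ``an incompressible, $\partial$--incompressible planar surface with few boundary components is boundary-parallel or yields a reducing sphere.'' They are statements about a \emph{pair} of planar surfaces $P,Q$ (punctured spanning disks for an unknotted component and for its band-resolution) meeting the genus-two handlebody $N=\nu(K\cup\mathfrak{b})$ in prescribed curve families $A_m$, $B_n$, with the conclusion that innermost such curves bound disjoint-or-once-intersecting disks in $\overline{M\setminus N}$. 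To reach that setting you must first produce a \emph{single band}: the correct translation of $c_i=b-1$ is that the $c_i$--component unlink $\T_i\cup\overline{\T_{i+1}}$ is in once-perturbed $b$--bridge position, so its bridge disks give shadows forming $b-2$ bigons and one quadrilateral, and one arc of the quadrilateral is the core of a band $\mathfrak{b}$ carrying $L=\T_{i+1}\cup\overline{\T_{i-1}}$ to $L'=\T_{i+1}\cup\overline{\T_{i}}$ (indices as convenient). That single-band presentation --- not a parallelism of a strand of $\T_2$ into $\Sigma$, which you assert only ``unless the configuration is already completely decomposable,'' a formulation that is circular --- is what makes the cited theorems applicable, and the dichotomy band-joins-two-components versus band-joins-one-component is exactly your $|c_{i-1}-c_{i+1}|=1$ versus $0$ split.

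Second, even granting the correct setup, your trichotomy ``compresses / boundary-compresses / boundary-parallel'' is not how the geometric conclusion is converted back into a move on $\TT$. The conclusion of \cite{Sch_4crit}/\cite{BleSch_proj} is used to show that the tangle $(B^3,T)=(S^3,L)\setminus(\nu(\mathfrak{b}),\nu(L\cap\mathfrak{b}))$ is a split union of a trivial tangle and an unlink; one then needs the additional input that a bridge splitting of such a pair is either minimal or perturbed (Theorem~2.2 of~\cite{Zup_13_Bridge-and-pants}, extended to punctured $3$--manifolds). Minimality forces $b=c_{i+1}$ and hands the problem to Proposition~\ref{prop:c=b} (or, in the self-band case, splits off a $(2,1)$--bridge trisection of a projective plane plus a $(b-2)$--bridge trisection covered by Proposition~\ref{prop:c=b}); perturbedness lets you deperturb and apply the inductive hypothesis. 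Without this ingredient your induction has no guaranteed way to decrease $b$, which is precisely the failure mode you flag at the end (``an essential non-boundary-parallel surface giving no usable decomposition'') but do not resolve. Your final remark that the $P_+$ versus $P_-$ ambiguity is immaterial is also slightly off: the statement distinguishes neither, but the normal Euler number bookkeeping via Proposition~\ref{spancap}(\ref{signitem}) is not what identifies the summand --- the identification comes from the classification of $(2;1,1,1)$--bridge trisections in the base case.
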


The key ingredient in the proof of the theorem is a pair of results of Scharlemann  and Bleiler-Scharlemann about planar surfaces in 3--manifolds~\cite{Sch_4crit,BleSch_proj}.  We refer the reader to Section~1 of each of these papers, as we will adopt the notation of~\cite[Theorem~1.1]{Sch_4crit} and~\cite[Theorem~1.3]{BleSch_proj} in the proof below.

\begin{proof}[Proof of Theorem~\ref{thm:c=b}]
	We induct on the bridge number $b$ of the bridge trisection.
	When $b=1$ or $b=2$, there is an easy classification of $b$--bridge trisections~\cite[Subsection~4.3]{MeiZup_bridge1}, which we take as the base case.
	Assume the theorem holds when  the bridge number is less than $b$, and let $\TT$ be a $(b;c_1,c_2,c_3)$--bridge trisection.
	Assume without loss of generality that $c_3=b-1$.
	
	Suppose that $\T_1$, $\T_2$, and $\T_3$ are the three tangles comprising the spine of the bridge trisection.
	Every $b$--bridge splitting of a $c$--component unlink with $b>c$ is a perturbation of the standard $c$--bridge splitting of the $c$--component unlink, which is itself unique up to isotopy~\cite[Proposition~2.3]{MeiZup_bridge1}.
	It follows that there exist collections $\Delta_1$ and $\Delta_3$ of bridge disks for $\T_1$ and $\T_3$, respectively, so that the shadows $\Delta_1^* = \Delta_1\cap\Sigma$ and $\Delta_3^* = \Delta_3\cap\Sigma$ have the property that $\Delta_1^*\cup\Delta_3^*$ is an embedded collection of $b-2$ bigons and a single quadrilateral.
	Let $\alpha_0^*$ denote one of the arcs of $\Delta_1^*$ in the quadrilateral.

	Let $L = \T_2\cup\overline\T_3$, and let $\mathfrak b$ be the band for $L$ that is framed by $\Sigma$ and whose core is $\alpha_0^*$.
	Then the data $(\Sigma,L,\frak b)$ encodes a banded $b$--bridge splitting, since the resolution $L_\frak b$ is the unlink $L' = \T_2\cup\overline\T_1$.
	(Here, we think of $\frak b$ as being slightly perturbed to lie in the 3--ball containing $\T_3$.)
	We refer the reader to Section~3 of~\cite{MeiZup_bridge1}, especially Lemma~3.3, for more details about banded bridge splittings and how they arise from bridge trisections.
	
	Assume without loss of generality that $c_2 = |L|$ is greater than or equal to $c_1 = |L'|$.
	We break the remainder of the proof into two cases:  Either $c_2>c_1$ or $c_2=c_1$.
	Note that since there is only one band present, we must have $c_2-c_1\leq 1$.
	The proofs of the two cases are very similar, except that we apply~\cite[Theorem~1.1]{Sch_4crit} in the first case and~\cite[Theorem~1.3]{BleSch_proj} in the second.
	
	\textbf{Case 1.} If $c_2 = c_1 + 1$, then $\frak b$ connects distinct components $K_1$ and $K_2$ of $L$.
	Let $K'$ denote the component of $L'$ obtained as the resolution $(K_1\cup K_2)_\frak b$.
	We now translate this set-up into the notation of~\cite[Section~1]{Sch_4crit}.
	Let $N=\overline{\nu(K_1\cup\frak b\cup K_2)}$, a genus two handlebody, and let $M = S^3\setminus\nu(L\setminus(K_1\sqcup K_2))$.
	Let $E_1$ denote the spanning disk bounded by $K_1$.
	Let $P' = \partial\nu(E_1)$, a 2--sphere disjoint from $K_1\sqcup K_2$ in $M$.
	Let $Q'$ denote a spanning disk bounded by $K'$ in $M$.
	Let $P = \overline{P'\setminus N}$, and let $Q = \overline{Q'\setminus N}$.

	It is clear from this set-up that $P\cap\partial N$ is a collection of $m$ parallel separating curves $A_m$ for some odd $m$, since $P'$ was disjoint from $K_1$ and $K_2$, but intersects $\frak b$ transversely.
	(See~\cite[Fig.~1]{Sch_4crit}.)
	Similarly, we have $Q\cap\partial N$ agrees with the curves $B_n$, since $\partial Q'=K'$ and $Q'$ may crash through $\frak b$ in arcs parallel to its core.
	Thus, $M$, $N$, $P$, and $Q$ satisfy the hypotheses of~\cite[Theorem~1.1]{Sch_4crit}.
	The relevant conclusion is that $A_1$ and $B_0$ bound embedded disks $E$ and $F$ in $\overline{M\setminus N}$ that intersect in a single arc.
	(Compare with the proof of~\cite[Main Theorem]{Sch_4crit}.)
	
	Translating this conclusion back into the setting of interest, we find that the disk $E$ is properly embedded in $S^3\setminus\nu(\frak b)$ and that $F$ is a spanning disk for $K'$.
	This implies that the pair $(B^3,T) = (S^3, L)\setminus(\nu(\frak b),\nu(L\cap\frak b))$ is the split union of a trivial tangle and an unlink: The strands of the trivial tangle are parallel into push-offs of $E$ via the components of $F\setminus\nu(E)$, at which point they are parallel into $\partial\nu(\frak b)$ via the push-offs of $E$.
	
	The bridge sphere $\Sigma$ induces a bridge splitting $(B^3,T)$.
	By Theorem~2.2 of~\cite{Zup_13_Bridge-and-pants}, $\Sigma$ is either minimal for $(B^3,T)$ or perturbed\footnote{Although Theorem~2.2 of~\cite{Zup_13_Bridge-and-pants}, as stated, applies to a closed 3--manifold $M$ and a link $K$ in $M$, a verbatim proof establishes the more general case where the 3--manifold $M$ is replaced by a punctured 3--manifold and the link $K$ is a tangle.}.
	If the splitting were minimal, we would have $b=c_2$, so $\TT$ would be completely decomposable by Proposition~\ref{prop:c=b}.
	If the splitting is perturbed, then $\TT$ is perturbed, since each bridge arc of $\T_3$ that is disjoint from $\nu(\frak b)$ is a strand of a 1--bridge splitting of a component of $L_3 = \T_3\cup\overline\T_1$.
	After de-perturbing $\TT$, we find that $\TT$ is completely decomposable, by the inductive hypothesis.
	
	\textbf{Case 2.} If $c_2 = c_1$, then $\frak b$ connects a component $K$ of $L$ to itself.
	Let $K' = K_\frak b$.
	We now translate this set-up into the notation of~\cite[Section~1]{BleSch_proj}, abbreviating the discourse where it is overly repetitive of the previous case.
	Let $M = S^3\setminus\nu(L\setminus K)$, and let $N = \nu(K\cup\frak b)$.
	Let $P'$ be a spanning disk bounded by $K$ in $M$, and let $Q'$ be a spanning disk bounded by $K'$ in $M$. Let $P = \overline{P'\setminus N}$, and let $Q = \overline{Q'\setminus N}$.
	
	It is clear from the set-up that the hypotheses of~\cite[Theorem~1.3]{BleSch_proj} are satisfied, so we can conclude that some $A_0$ and $B_0$ bound embedded disks $E_P$ and $E_Q$, respectively, in $\overline{M\setminus N}$.
	Moreover, there is a properly-embedded disk $D$ in $\overline{M\setminus N}$, disjoint from $E_P$ and $E_Q$, that runs once over one of the handles of $N$ and is disjoint from the other handle.
	We can extend $E_P$ to a spanning disk $F$ for $K$.
	(Compare with the proof of~\cite[Theorem~1.8]{BleSch_proj}.)
	
	The strands of $K\setminus\nu(\frak b)$ are parallel into push-offs of $D$ via the components of $E_P\setminus\nu(D)$, at which point they are parallel into $\partial\nu(\frak b)$ via the push-offs of $D$.
	It follows that the tangle $(B^3,T) = (S^3,L)\setminus(\nu(\frak b),\nu(\frak b\cap K))$ is the split union of a trivial tangle and an unlink, and $\Sigma$ gives rise to a bridge splitting of $(B^3,T)$.
	As before, this splitting is either minimal or perturbed. The case that the splitting is perturbed has the same consequence as in Case 1 above.
	
	If the splitting is minimal, then it is a split union of a 2--bridge splitting of the trivial tangle and a $(b-2)$--bridge splitting of an unlink.
	It follows that the bridge trisection is a split union: $\TT = \TT'\sqcup\TT''$, where $\TT'$ is a $(2,1)$--bridge trisection (of a projective plane, necessarily), and $\TT''$ is a $(b-2; c_1-1, c_2-1, b-2)$--bridge trisection (of an unlink of 2--spheres, necessarily).
	The latter is completely decomposable by Proposition~\ref{prop:c=b}.
\end{proof}

We can also use Theorem~\ref{thm:c=b} to understand surface-links with particular banded link presentations, where a \emph{banded link presentation} $(L,v)$  consists of an unlink $L \subset S^3$ and a collection of bands $v$ such that the resolution $L_v$ of $L$ along $v$ is also an unlink.  Every banded link presentation gives rise to a surface $\Ss$ in $S^4$, and conversely, every surface-link $\Ss$ in $S^4$ can be presented by a banded link~\cite{kss}.

In \cite[Section 3]{MeiZup_bridge1}, the authors introduced the notion of  \emph{banded bridge splitting} of $(L,v)$, a bridge splitting of $L$ such that the bands $v$ are isotopic into the bridge sphere with the surface framing and are dual to a collection of bridge disks on one side.  They showed that $(S^4,\Ss)$ admits a $(b;\mathbf{c})$--bridge trisection if and only if a banded link presentation $(L,v)$ of $\Ss$ admits a banded $b$-bridge splitting such that $|L| = c_1$, $|v| = b - c_2$, $|L_v| = c_3$.  As a corollary to Theorem~\ref{thm:c=b}, we obtain the following, which states, in essence, that a surface is unknotted if the bands are attached in a relatively simple way to the maxima or minima disks.

\begin{corollary}
Suppose a surface-link $\Ss$ in $S^4$ is presented by a banded link $(L,v)$ with a banded $b$--bridge splitting such that $b = |L| + 1$ or $b = |L_v| + 1$.  Then $\Ss$ is an unlink of 2--spheres or an unlink of 2--spheres and an unknotted projective plane.
\end{corollary}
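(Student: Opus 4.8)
The plan is to translate the hypothesis on the banded link presentation directly into the language of bridge trisections and then invoke Theorem~\ref{thm:c=b}. Recall from \cite[Section 3]{MeiZup_bridge1} that if $(L,v)$ admits a banded $b$--bridge splitting with $|L| = c_1$, $|v| = b - c_2$, and $|L_v| = c_3$, then $(S^4,\Ss)$ admits a $(b;c_1,c_2,c_3)$--bridge trisection $\TT$. So the first step is simply to set $c_1 = |L|$, $c_2 = b - |v|$, $c_3 = |L_v|$ and observe that $\Ss$ is the surface-link underlying some $(b;\mathbf{c})$--bridge trisection with these parameters.

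Next I would examine the two hypotheses. If $b = |L| + 1$, then $c_1 = |L| = b-1$, so the bridge trisection $\TT$ satisfies $c_i = b-1$ with $i=1$. If instead $b = |L_v| + 1$, then $c_3 = |L_v| = b-1$, so $\TT$ satisfies $c_i = b-1$ with $i = 3$. In either case the hypothesis of Theorem~\ref{thm:c=b} is met, so $\TT$ is completely decomposable and the underlying surface-link $\Ss$ is either the unlink of $\min\{c_j\}$ 2--spheres or the unlink of $\min\{c_j\}$ 2--spheres together with one unknotted projective plane, according to whether $|c_{i-1} - c_{i+1}| = 1$ or $c_{i-1} = c_{i+1}$. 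This is exactly the conclusion claimed, so the corollary follows immediately.

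The only point requiring a little care — and the step I expect to be the main (though still minor) obstacle — is making sure the bookkeeping of indices and the correspondence between banded bridge splitting data and bridge trisection parameters is invoked correctly; in particular one should double-check that the ``$b = |L|+1$'' and ``$b = |L_v|+1$'' cases really do land on the $c_i = b-1$ hypothesis (rather than, say, forcing $b-c_2$ to be small, which is a different regime not covered by Theorem~\ref{thm:c=b}). Once that identification is confirmed, there is nothing further to prove: the statement is a direct specialization of Theorem~\ref{thm:c=b} phrased in terms of banded link presentations, and the description of $\Ss$ as an unlink of 2--spheres (possibly with one $\RP^2$ summand) is inherited verbatim from that theorem.
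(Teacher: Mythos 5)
Your proof is correct and follows essentially the same route as the paper: the authors likewise use the correspondence $|L| = c_1$, $|v| = b - c_2$, $|L_v| = c_3$ from the banded bridge splitting to produce a $(b;\mathbf{c})$--bridge trisection with $c_1 = b-1$ or $c_3 = b-1$, and then apply Theorem~\ref{thm:c=b} directly. Your index bookkeeping is right, and the conclusion is inherited verbatim from that theorem exactly as you say.
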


The corollary exploits a feature of trisection theory called \emph{handle triality}:  If $(L,v)$ admits a banded bridge splitting as in the corollary, then it admits a $(b,\mathbf{c})$--bridge trisection such that $c_1 = b-1$ or $c_3 = b-1$.  By the three-fold symmetry of the trisection setup, we can extract a different banded link presentation with a single band, as in the proof of Theorem~\ref{thm:c=b}, and now we rely on known results about surface-links built with a single band to classify $\Ss$.  The result can be interpreted as an analog for knotted surfaces of Corollary~1.3 from~\cite{MSZ}.

\bibliographystyle{amsalpha}
\bibliography{BT-facts}

\end{document}